\theoremstyle{plain}
\newtheorem{de}{Definition}[section]
\newtheorem{lem}[de]{Lemma}
\newtheorem{prop}[de]{Proposition} 
\newtheorem{cor}[de]{Corollary}
\newtheorem{thm}[de]{Theorem}
\newtheorem{lemma}[de]{Lemma}
\theoremstyle{definition}
\newtheorem{rem}[de]{Remark}
\DeclareMathOperator{\curl}{curl}
\renewcommand{\div}{\operatorname{div}}
\DeclareMathOperator{\tr}{tr}
\newcommand{\RR}{{\mathbb{R}}}
\newcommand{\NN}{{\mathbb{N}}}
\newcommand{\cH}{{\mathcal{H}}}
\newcommand{\ph}{\varphi}
\newcommand{\ep}{\varepsilon}
\newcommand{\D}{\mathrm{d}}
\newcommand{\dd}{\,\mathrm{d}}
\newcommand{\sym}{\mathrm{sym}}
\newcommand{\maxi}{\mathrm{max}}
\newcommand{\tdiv}{\mathrm{div}}
\newcommand{\tcurl}{\mathrm{curl}}
\newcommand{\hra}{\hookrightarrow}
\newcommand{\beq}{\begin{equation}}
\newcommand{\eeq}{\end{equation}}
\newcommand{\ol}{\overline}
\newcommand{\wh}{\widehat}
\numberwithin{equation}{section}
\newcommand{\vertiii}[1]{{\left\vert\kern-0.25ex\left\vert\kern-0.25ex\left\vert #1 
    \right\vert\kern-0.25ex\right\vert\kern-0.25ex\right\vert}}
\begin{document}

\title{Boundary stabilization of quasilinear Maxwell equations}

\author{Michael Pokojovy}
\address{M. Pokojovy, Department of Mathematical Sciences, University of Texas at El Paso,
500 W University Ave, El Paso, TX 79968, USA}
\email{mpokojovy@utep.edu}

\author{Roland Schnaubelt}
\address{R. Schnaubelt, Department of  Mathematics,
Karlsruhe Institute of Technology, 76128 Karlsruhe, Germany}
\email{schnaubelt@kit.edu}

\thanks{We gratefully acknowledge financial support from the Deutsche Forschungsgemeinschaft (DFG) through CRC\,1173.}

\keywords{Quasilinear Maxwell equations; Silver-M\"uller boundary conditions; nonhomogeneous anisotropic materials; 
global existence; exponential stability}

\subjclass[2000]{
    % 35 Partial differential equations
    35Q61,   %   Maxwell equations
    35L60,   
    35L50,   %   Existence problems: global existence, local existence, non-existence
    35A02,   %   Uniqueness problems: global uniqueness, local uniqueness, non-uniqueness
    35B40,   %   Asymptotic behavior of solutions
    35B65}    %   Smoothness and regularity of solutions

\date\today

\begin{abstract}
We investigate an initial-boundary value problem for a quasilinear nonhomogeneous, anisotropic Maxwell system subject to an absorbing 
boundary condition of Silver \& M\"uller type in a smooth, bounded, strictly star-shaped domain of $\mathbb{R}^{3}$.
Imposing usual smallness assumptions in addition to standard regularity and compatibility conditions, 
a nonlinear stabilizability inequality is obtained by showing nonlinear dissipativity and observability-like estimates enhanced by an 
intricate regularity analysis.  With the stabilizability inequality at hand, the classic nonlinear barrier method is employed to prove that
small initial data admit unique classical solutions that exist globally and decay to zero at an exponential rate.
Our approach is based on a recently established local well-posedness theory in a class of $\cH^{3}$-valued functions.
\end{abstract}

\maketitle

%\centerline{\bf Preliminary version of \today}

%%%%%%%%%%%%%%%%%%%%%%%%%%%%%%%%%%%%%%%%%%%%%%%%%%%%%%%%%%%%%%%%%%%%%%%%%%

%%%%%%%%%%%%%%%%%%%%%%%%%%%%%%%%%%%%%%%%%%%%%%%%%%%%%%%%%%%%%%%%%%%%%%%%%%
% Section 1: Introduction
%%%%%%%%%%%%%%%%%%%%%%%%%%%%%%%%%%%%%%%%%%%%%%%%%%%%%%%%%%%%%%%%%%%%%%%%%%
\section{Introduction}
\label{SECTION:INTRODUCTION}

The goal of this work is to establish global-in-time existence and exponential stability for the quasilinear nonhomogeneous, anisotropic Maxwell 
system  in a bounded, smooth star-shaped domain of $\mathbb{R}^{3}$.
Provided the initial data are sufficiently small in the Sobolev $\cH^{3}$-norm,
we will show that the damping effect from an absorbing boundary condition of Silver \& M\"uller-type 
can render the system globally well-posed and exponentially stable.

The Maxwell equations, laying the mathematical foundation of the theory of electro-magnetism, 
establish a relation between the electric fields $E$, $D$ and the magnetic fields $B$, $H$ via Amp\`{e}re's circuital law and Faraday's law of induction. 
These basic continuity equations read as
\begin{equation*}
	\partial_{t} D = \curl H \quad \text{ and } \quad
	\partial_{t} B = -\curl H, \qquad t > 0, \ \  x \in \Omega,
	\label{EQUATION:FARADAY_AND_AMPERE_LAWS}
\end{equation*}
where $\Omega\subset \RR^3$ is a bounded domain with a smooth boundary $\Gamma$ and outer unit normal $\nu$.
In our work, we take $(E, H)$ as the state variables and postulate the instantaneous nonlinear material laws
\begin{equation*}
    \label{EQUATION:NONLINEAR_INSTANTANEOUS_MATERIAL_LAWS}
    D = \varepsilon(\cdot, E) E \quad \text{ and } \quad B = \mu(\cdot, H) H \qquad \text{ in } \Omega
\end{equation*}
with nonlinear, nonhomogeneous, anisotropic tensor-valued permittivity $\ep$ and permeability $\mu$.
Imposing a generalized linear Silver \& M\"uller boundary condition with a tensor-valued $\lambda$, we arrive at the quasilinear Maxwell system
\begin{align}
    \notag
    \partial_t \big(\ep(x, E(t, x))E(t, x)\big) &= \phantom{-}\curl H(t, x), & t &\ge 0, \ x \in \Omega, \\
    \notag
    \partial_t \big(\mu(x, H(t, x))H(t, x)\big) &= -\curl E(t, x), & t &\ge 0, \ x \in \Omega, \\
    \label{EQUATION:MAXWELL}
    \div \big(\ep(x, E) E\big) = 0, \qquad \div \big(\mu(x, H) H\big) &= 0, & t &\ge 0, \ x \in \Omega, \\
    \notag
    H(t, x) \times \nu(x) + \big(\lambda(x) \big(E(t, x) \times \nu(x)\big)\big) \times \nu(x) &= 0, & t &\ge 0, \ x \in \Gamma, \\
    \notag
    E(0, x) = E^{(0)}(x),  \qquad H(0, x) &= H^{(0)}(x), & x & \in \Omega.
\end{align}

The absorbing boundary condition in Equation \eqref{EQUATION:MAXWELL} above emerges as a first-order approximation of the more comprehensive 
``transparent'' boundary condition. But, while being dissipative in its nature, in contrast to the latter condition,
Silver \& M\"uller-type boundary conditions (sometimes \cite{PiZa1995} also -- somewhat inadequately \cite{AmNe1999} -- referred to as 
``impedance boundary conditions'') account for the waves being reflected back into the domain (cf.\ \cite[p. 136]{ElLaNi2002.2}).
From the physical point of view, such boundary conditions model the process of how electromagnetic waves are scattered by an obstacle $\Omega$
under the assumption that the underlying medium does not allow for deep penetration of the wave (cf.\ \cite[p. 20]{CaCoMo2011}).

The solenoidality condition or the divergence-freeness of $E$ and $H$ express the absence of electric and magnetic charges in the medium,
which is true in any physical environment for the  magnetic charges. This condition excludes certain non-zero equilibrium states of the system.
Because of the absence of interior conductivity or currents, the solenoidality conditions in Equation \eqref{EQUATION:MAXWELL} are uniquely related
to  the so-called ``initial solenoidality''
\begin{align}
    \label{EQUATION:INITIAL_SOLENOIDALITY}
    \div\big(\ep(\cdot, E^{(0)}) E^{(0)}\big) = 0, \qquad \div\big(\mu(\cdot, H^{(0)}) H^{(0)}\big) &= 0 \qquad \text{ in } \Omega.
\end{align}
Provided the sytem \eqref{EQUATION:MAXWELL} possesses a regular solution, the solenoidality condition in the fourth equation of 
\eqref{EQUATION:MAXWELL} trivially yields the initial solenoidality \eqref{EQUATION:INITIAL_SOLENOIDALITY}.
Vice versa, applying the $\div$-operator to the first two equations in \eqref{EQUATION:MAXWELL} and recalling the  distributional identity 
$\div \curl = 0$, the solenoidality conditions of \eqref{EQUATION:MAXWELL} follow (cf.\ \cite[Lemma~7.25]{Sp0}).
Therefore, in the following, we will be freely switching between the two equivalent conditions.

Since we are interested in classical  $\cH^{3}$-valued solutions, it is natural to expect that both regularity and compatibility conditions 
are to be imposed. Indeed, given a smooth solution on a closed time-space cylinder, an obvious requirement is that all response tensors 
(i.e., $\ep$, $\mu$ and $\lambda$), the initial data $\big(E^{(0)}, H^{(0)}\big)$ and the boundary $\Gamma$ are smooth.
Further, the higher-order time derivatives of the solution at $t = 0$ are expected to satisfy the boundary % and the solenoidality 
conditions translating into compatibility conditions on the initial data.
These have been described in the
recent well-posedness study \cite{SchSp2}. Unless the nonlinear response tensors $\ep$ and $\mu$ are globally positive definite,
the smallness of the solution and thus the initial data, is also necessary to preserve the ``hyperbolicity'' of the system \eqref{EQUATION:MAXWELL}.
Last but not least, appropriate symmetry conditions on the response tensors along with a ``tangentiality'' condition 
(cf.\ Section \ref{SECTION:SETTINGS_AND_RESULTS})
on the tensor field $\lambda$ are required.

Turning to global solutions, a uniform stabilization mechanism in the system \eqref{EQUATION:MAXWELL} needs to be discovered to pave the way 
for a reasonable long-time analysis. In contrast to semilinear problems, uniform stability is often the only way to global existence, 
especially for hyperbolic problems. In bounded domains, it is the exponential stability that furnishes the global existence of smooth solutions
originating in a vicinity of a stable equilibrium \cite{JiaRa2000, LaPoSch2018, LPW, LaPoWa2018, MePoSH2007, MRRa2002}.
In the full space $\mathbb{R}^{d}$, dispersive estimates are the path to the long-time existence 
\cite{LiPo2015,LZ, RiRa1995, RiRa1996, Ra, RaUe2017,Spe}. In case a (too) weak solution concept is employed \cite{DNS, Sp2016} 
or there is little to no dissipation (even in a smooth solution class) \cite{Al1995, Al1999.2, Al1999.1,  Al2000, Jo1981},
blow-ups are known to occur in finite time -- both for large and small data,
often independently of whether the initial data are smooth or compactly supported, etc.

In view of the dissipation present in the Maxwell system \eqref{EQUATION:MAXWELL}  at the basic energy level 
and rooted in its absorbing boundary condition,
the question arises whether this dissipative mechanism is sufficiently strong to drive the system to the zero equilibrium.
In this paper, we give a positive answer to this question in strongly star-shaped bounded $C^{5}$-domains $\Omega$ of $\mathbb{R}^{3}$ -- 
provided the initial data are small in $\cH^{3}(\Omega)$ and satisfy the compatibility conditions \eqref{ass:comp} introduced in 
Section~\ref{SECTION:SETTINGS_AND_RESULTS} to follow. Our stability proof will heavily rely on the local well-posedness theory, 
in particular, the \emph{a priori} estimates, recently established in \cite{SchSp2}
and a mixture of energy methods and control-theoretic techniques such as Rellich multipliers for proving observability/stabilizability, etc.
The core step of our reasoning  is a refined regularity analysis based on $\div$-$\curl$ ``elliptic'' theory.

We continue with a brief literature review accompanied by a general discussion. Maxwell equations  \eqref{EQUATION:MAXWELL} with symmetric and 
positive definite $\partial_E (\ep(\cdot,E)E)$ and $\partial_H (\mu(\cdot,H)H)$ can be viewed as a 
first-order quasilinear symmetric hyperbolic system. Due to their generality and ubiquity in mathematical physics, these systems have drawn 
major attention in the literature. In the full space $\Omega = \mathbb{R}^{d}$, a well-posedness theory due to Kato \cite{Ka} is known.
Applied to Equation \eqref{EQUATION:MAXWELL}, it furnishes the existence of a local-in-time $\cH^{3}$-valued solution. As for the global existence, 
for the quasilinear Maxwell system, we are only aware of very few results \cite{LZ, Ra, Spe} on the full space $\Omega = \RR^{3}$ 
that establish global existence and uniform stability of smooth solutions to small initial data. 
These studies have at their heart dispersive estimates which are not available on bounded domains. On the other hand, a blow-up behavior in 
 $\cH(\curl)$ under various boundary conditions has recently been shown in \cite{DNS}, whereas blow-up in $W^{1,\infty}(\Omega)$
has been known since a long time, cf.\ \cite{Al1995}.

Turning to domains with boundary, numerous solution theories are available \cite{Ba1979, Gu, Ka1987, Oh1981, Ra1985, Scho1986, Se1996}.
However, due to the characteristicity of the boundary symbol associated with \eqref{EQUATION:MAXWELL} (cf.\ \cite{Sp0, Sp1} for the case of 
a perfect conductor), a regularity loss in the normal direction possibly occurs making only few of these theories (e.g., \cite{Gu, Se1996}) applicable.
The additional price to pay is that a cumbersome framework of weighted Sobolev spaces of very high order needs to be employed
greatly reducing the practical applicability of these results. While the existence (without uniqueness or continuous dependence on the data) 
for Maxwell system \eqref{EQUATION:MAXWELL} has been established in the early work \cite{PiZa1995}, the full Hadamard well-posedness in  $\cH^3$ 
for perfectly conducting boundary conditions \cite{Sp0, Sp2, Sp1},
conservative interface conditions \cite{SchSp3} or absorbing boundary conditions \cite{SchSp2} as in Equation~\eqref{EQUATION:MAXWELL}
have only been proved very recently. We will strongly rely on \cite{SchSp2} in this paper.

In our main Theorem~\ref{thm:main}, we demonstrate that local $\cH^3$-valued solutions to \eqref{EQUATION:MAXWELL} can be globally extended
and decay exponentially to zero in the same topology under a smallness assumption on the initial data (in addition to natural comptability conditions).
In addition, we assume that the domain $\Omega$ is strictly star-shaped and the space derivatives of  $\ep(x,0)$ and $\mu(x,0)$ satisfy the lower bound 
\eqref{ass:coeff3}. The long-time behavior of Maxwell system has been extensively investigated in recent literature
(see, e.g., \cite{AnPo2018, El, ElLaNi2002.2, ElLaNi2002,ET12, komornik, lagnese, NiPi2003, NiPi2004, NiPi2007,phung} and references therein).
In all of these works, the authors restrict their attention to linear permittivity and permeability responses, 
often  homogeneous or isotropic ones, rendering the problem linear or semilinear.
In contrast, our problem \eqref{EQUATION:MAXWELL} is genuinely quasilinear with 
the only dissipativity source entering the system through the Silver \& M\"uller-type boundary condition.
To the best of our knowledge, apart from our companion paper \cite{LaPoSch2018} for the case of interior conductivity and perfectly conducting 
boundary conditions, no global results on \emph{quasilinear} Maxwell systems in domains with boundary are available.

In the linear situation, the aforementioned asymptotic results are furnished by an dissipativity inequality
coupled with an observability-type estimate providing a lower bound on the dissipation.
We refer the reader to \cite{komornik} for the case of scalar, constant $\ep, \mu, \lambda$;
\cite{ElLaNi2002.2,ElLaNi2002} for $x$-dependent scalar $\ep, \mu$ and nonlinear $x$-independent scalar $\lambda$;
\cite{NiPi2003} for the case of $(t, x)$-dependent scalar $\ep, \mu$ and nonlinear $x$-dependent scalar $\lambda$;
\cite{AnPo2018} for general nonohomogeneous, anisotropic $\ep, \mu$ and nonlinear $x$-independent scalar $\lambda$ and a nonlinear boundary 
feedback, etc. The use of Rellich multipliers in the present paper is motivated by the linear observability result \cite{El07}.

In our nonlinear situation, given the initial data are small, the perturbation terms also remain small till a certain time,
but the smallness is only guaranteed in much stronger norms than those controlled by the dissipation.
Hence, a rather elaborated regularity theory needs to be developed to close the regularity gap -- and, even beyond that,
with constants independent of the time interval length. Namely, for $k \in \{0, 1, 2\}$, the $\cH^{3 - k}(\Omega)$-norms of 
$\partial_{t}^{k} (E, H)$ need to be estimated by the $L^{2}$-norm of $\big(\partial^{l}_t (E, H)\big)_{l \in \{0, 1, 2, 3\}}$,
plus superlinear error terms.
This fact is established in our core Proposition~\ref{PROPOSITION:REGULARITY_BOOST} which is proved in Section~\ref{SECTION:REGULARITY_BOOST}
based on theory developed in Section~\ref{SECTION_AUXILIARY_RESULTS}. There we show an ``elliptic'' regularity result
(essentially due to \cite{CoDaNi2010}), which allows us to reconstruct the full $\cH^{1}(\Omega)$-norm of $(E, H)$ from the $L^{2}(\Omega)$-norms 
of $\curl (E, H)$ and $\div(\ep E, \mu H)$ as well as the $\cH^{1/2}(\Gamma)$-norm of the boundary data.
In contrast to our earlier work \cite{LaPoSch2018}, due to the absence of the electric resistance term,
solenoidality properties are available both for $E$ and $H$,  which facilitates the application of this fact. 
However,  higher-order normal derivatives cannot be controlled in this way since they destroy the boundary condition. 
Instead, the required normal regularity has to be deduced from the evolution and divergence equations in \eqref{EQUATION:MAXWELL},
using ideas as in \cite{LaPoSch2018}.
%without the necessity of solving an ordinary-differential equation for $\div E$.
Appropriate localization techniques in a boundary collar also need to be employed.
Along this way, both anisotropy and inhomogeneity impose an additional challenge.
Paired with the dissipativity/observability estimates of Propositions \ref{prop:energy} \& Corollary~\ref{observe},
the regularity boost of Proposition~\ref{PROPOSITION:REGULARITY_BOOST} furnishes
the nonlinear stabilizability estimate of Proposition \ref{prop:z}. Then our main result from Theorem~\ref{thm:main} is a direct consequence
of the standard barrier method (cf.\ \cite{LaPoSch2018,LPW,  LaPoWa2018, MePoSH2007, MRRa2002}, etc.).

In the next section we discuss basic notation \& results and present our main Theorem~\ref{thm:main}.
The energy and observability-type estimates are proved in Section~\ref{sec:energy}.
In Section~\ref{SECTION:STABILIZABILITY} we state the crucial regularity result of Proposition~\ref{PROPOSITION:REGULARITY_BOOST}
before we establish Theorem~\ref{thm:main}. The proof of this proposition is based on various auxiliary results 
discussed in Section~\ref{SECTION_AUXILIARY_RESULTS}. The technically most demanding part is Section~\ref{SECTION:REGULARITY_BOOST},
where Proposition~\ref{PROPOSITION:REGULARITY_BOOST} is shown.

%%%%%%%%%%%%%%%%%%%%%%%%%%%%%%%%%%%%%%%%%%%%%%%%%%%%%%%%%%%%%%%%%%%%%%%%%%
% Section 2: Problem Settings and Main Results
%%%%%%%%%%%%%%%%%%%%%%%%%%%%%%%%%%%%%%%%%%%%%%%%%%%%%%%%%%%%%%%%%%%%%%%%%%

\section{Problem Settings and Main Results}
\label{SECTION:SETTINGS_AND_RESULTS}
Throughout this paper, let $\Omega\subset \RR^3$ be a bounded domain with a $C^{5}$-boundary $\Gamma := \partial \Omega$
and the unit outer normal vector $\nu \colon \Gamma \to \mathbb{R}^{3}$. 

For an arbitrary, but fixed $T > 0$, let $J = J_T = [0, T]$ denote the time interval, while $\Omega_T = (0, T) \times \Omega$ and $\Gamma_T = (0, T) \times \Gamma$
stand for the interior and the lateral boundary of the time-space cylinder, respectively.
For the sake of brevity and convenience, we will often make no distinction between respective spaces of scalar and vector-valued functions,
e.g., $L^{2}(\Omega)$ and $\big(L^{2}(\Omega)\big)^{3} \simeq L^{2}(\Omega, \mathbb{R}^{3})$, etc.
Further, we will frequently omit the domain of integration and merely write $\cH^k$ in lieu of $\cH^{k}(\Omega)$, etc.,
in case the domain is unambiguously clear from the context. We will always employ the standard Lebesgue measure on open domains of $\mathbb{R}^{3}$
and the associated surface measure on smooth oriented surfaces $\Gamma$ in $\mathbb{R}^{3}$.
In both cases, these measures will be denoted as $\dd x$ in respective volumetric or surface integrals.

Our central assumptions employed throughout the paper are the regularity and uniform positivity conditions of the material tensors
\begin{equation}
    \label{ass:coeff1}
    \begin{split}
        \ep, \mu &\in C^3\big(\ol{\Omega} \times \RR^3, \RR^{3 \times 3}_{\sym}\big), \quad
        \lambda \in C^{3}_{\tau}\big(\Gamma, \RR^{3 \times 3}_{\sym}\big), \\
        \ep(x, 0) &\ge 2 \eta I, \quad \mu(x, 0) \ge 2 \eta I \quad \text{ for all } x \in \ol{\Omega} \quad \text{ and } \quad
        \lambda(x) \geq \eta I \quad \text{ for all } x \in \Gamma
    \end{split}
\end{equation}
for some constant $\eta > 0$, where $I$ stands for the $(3 \times 3)$-identity matrix.
The space $C^3(\ol{\Omega})$ denotes the space of (here: tensor-valued) functions,
which -- along with their derivatives up to order three -- possess continuous extensions onto $\ol{\Omega}$.
Likewise, $C^3_\tau(\Gamma) := C^{3}(\Gamma) \cap L^{2}_{\tau}(\Gamma)$ (cf. Equation \eqref{EQUATION_TANGENTIAL_L_2})
is the space of ``tangential'' tensor-valued functions $\alpha$
such that $\operatorname{im}_{\alpha(x)}\big(\operatorname{span}\{\nu\}^{\perp}\big) \subset \operatorname{span}\{\nu\}^{\perp}$ for all $x\in\Gamma$.

Next, following \cite{LaPoSch2018}, we introduce matrix-valued ``linearizations'' $\ep^\D$ and $\mu^\D$ of $\ep$ and $\mu$ via
\begin{equation*}
    \ep^\D_{jk}(x, \xi)= \ep_{jk}(x, \xi) + \sum_{l = 1}^{3} \partial_{\xi_k} \ep_{jl}(x, \xi) \, \xi_l , \qquad
    \mu^\D_{jk}(x, \xi)= \mu_{jk}(x, \xi) + \sum_{l = 1}^{3} \partial_{\xi_k} \mu_{jl}(x, \xi) \, \xi_l
\end{equation*}
for $x\in \ol{\Omega}$, $\xi\in\RR^3$ and $j, k \in \{1, 2, 3\}$, 
which will be used to express the time-derivatives of the left-hand sides of the first two equations in \eqref{EQUATION:MAXWELL}.
We further impose the following additional regularity and symmetry conditions:
\begin{equation}
    \label{ass:coeff2}
    \partial_\xi \ep, \partial_\xi \mu \in C^3(\ol{\Omega} \times \RR^{3}, \RR^{3 \times 3}), \quad
    \ep^\D = (\ep^\D)^\top, \quad \text{ and } \quad \mu^\D = (\mu^\D)^\top,
\end{equation}
where $(\cdot)^{\top}$ denotes the standard matrix transposition operator.

The continuity of respective functions along with assumptions \eqref{ass:coeff1} furnish the existence of a (possibly) small radius 
$\tilde{\delta} \in (0, 1]$  such that the following ``local positivity'' condition
\begin{equation}
    \label{est:coeff-lower}
    \ep(x, \xi) \ge \eta I, \quad \mu(x, \xi) \ge \eta I, \quad 
    \ep^\D(x, \xi) \ge \eta I, \quad \mu^\D(x, \xi) \ge \eta I
\end{equation}
holds true for all $\xi \in\RR^3$ with $|\xi|\le \tilde{\delta}$ uniformly in $x\in \ol{\Omega}$.

\begin{rem}
    The class of functions satisfying our assumptions \eqref{ass:coeff1} and \eqref{ass:coeff2} is non-trivial.
    Indeed, apart from obvious linear candidates, physically relevant instances of genuinely nonlinear tensor responses -- both isotropic and 
    anisotropic -- can be found in \cite[Example 2.1]{LaPoSch2018}. For further examples, we refer the reader to the monograph \cite{MN}.
\end{rem}

Continuing, for a smooth vector field $u$, let $\tr_n u$ be the trace of the normal component $u \cdot \nu$ on $\Gamma$ of $u$,
while $\tr_t u$ denotes the tangential trace $u \times \nu$ of $u$ on $\Gamma$.
The tangential component 
\begin{equation}
    \nu \times (\tr_t u) = \tr u - (\tr_n u) \nu
\end{equation}
of the full trace $\tr u$ will similarly be denoted as $\tr_\tau$.
By well-known results (cf.\ Theorems~IX.1.1 and IX.1.2 in \cite{DL}), the (linear) mappings 
\begin{equation*}
    \tr_n \colon \cH(\tdiv) \to \cH^{-1/2}(\Gamma) \quad \text{ and } \quad \tr_t \colon \cH(\tcurl) \to \big(\cH^{-1/2}(\Gamma)\big)^{3}
\end{equation*}
are continuous. Here, the Hilbert spaces
\begin{align*} 
    \cH(\tcurl) &= \Big\{u \in \big(L^2(\Omega)\big)^3 \,|\, \curl u \in \big(L^2(\Omega)\big)^3\Big\}  \quad \text{ and } \quad
    \cH(\tdiv) = \Big\{u \in \big(L^2(\Omega)\big)^3 \,|\, \div u \in L^2(\Omega)\Big\}
\end{align*} 
are equipped with the natural inner products. % of respective spaces they are contained in.
Similarly, following \cite[Equation (2.12)]{NiPi2007}, we define the ``tangential'' $L^{2}$-space
\begin{equation}
    \label{EQUATION_TANGENTIAL_L_2}
    L^{2}_{\tau}(\Gamma) = \Big\{v \in \big(L^{2}(\Gamma)\big)^{3} \,\big|\, v \cdot \nu = 0 \quad \text{ on } \Gamma\Big\}
\end{equation}
endowed with the standard $L^{2}$-inner product.

In this paper, we are interested in classical solutions
\begin{equation} \label{local}
    (E, H) \in \bigcap_{k = 0}^{3} C^{k}\big([0, T_{\maxi}), \big(\cH^{3 - k}(\Omega)\big)^{6}\big)=:G^3([0, T_{\maxi}))=G^3.
\end{equation}
The space $\cH^{3}(\Omega)$ in the equation above is known to be the optimal integer-order Sobolev space 
for $E(t), H(t)$ from the quasilinear Maxwell system \eqref{EQUATION:MAXWELL}, cf.\ \cite{Sp0}.
Provided the $C^{2}$-regularity of solutions at $t = 0$, the boundary conditions in the third equation in \eqref{EQUATION:MAXWELL}
need to be satisfied by respective time-derivates of $(E, H)$ at time $t = 0$.
These facts translate into usual ``compatibility conditions'' naturally arising when treating quasilinear problems.
To express these conditions in terms of the initial data $(E^{(0)}, H^{(0)})$,
we let $E^{(0)}, H^{(0)} \in \big(\cH^3(\Omega)\big)^3$ and introduce the vector fields
\begin{align}
    \label{def:comp}
    E^{(1)} &= \big(\ep^\D(E^{(0)})\big)^{-1} \curl H^{(0)}, \qquad 
    H^{(1)}  = -\big(\mu^\D(H^{(0)})\big)^{-1} \curl E^{(0)}, \notag \\
    E^{(2)} &= \big(\ep^\D(E^{(0)})\big)^{-1} \big[\curl H^{(1)} - (\nabla_E \ep^\D(E^{(0)}) E^{(1)}) \cdot E^{(1)}\big], \\
    H^{(2)} &= -\big(\mu^\D(H^{(0)})\big)^{-1} \big[\curl E^{(1)} + (\nabla_H \mu^\D(H^{(0)}) H^{(1)}) \cdot H^{(1)}\big] \notag
\end{align}
as formal representations of $\partial_{t} \big(E(t), H(t)\big)|_{t = 0}$ for $k \in \{1, 2\}$,
where we used the convention
\begin{equation}
    \big((\nabla A) \xi \cdot \eta\big)_{j} = \Big(\sum\nolimits_{i, k} \partial_i A_{jk} \xi_{k} \eta_i\Big)_{j}.
\end{equation}
With the notation from Equations \eqref{def:comp}, for $k \in \{0, 1, 2\}$ we obtain the following compatibility and initial solenoidality 
conditions
\begin{equation} \label{ass:comp}
    \tr_t H^{(k)} +\tr_t \big(\lambda \tr_t E^{(k)}\big) = 0 \quad \text{ on } \Gamma, \qquad
        \div \big(\ep(E^{(0)}) E^{(0)}\big) =\div \big(\mu(H^{(0)}) H^{(0)}\big) = 0 \quad \text{ on } \Omega.
     %   \tr_{n} \big(\ep(E^{(k)}) E^{(k)}) = 0, \qquad \tr_{n} \big(\mu(H^{(k)}) H^{(k)}) &= 0 \qquad \text{ on } \Gamma
\end{equation}

We will proceed by invoking the local well-posedness theory recently developed in \cite{SchSp2}. Introducing the constant
\begin{equation*}
    \delta_{0} = \min\{1, \tilde{\delta}/C_S\},
\end{equation*}
where $C_{S}$ denotes the norm of the Sobolev embedding $\cH^2(\Omega)\hra L^ \infty(\Omega)$, we select numbers $T > 0$ and 
$\delta \in (0, \delta_0]$. The (small) parameter $\delta > 0$ will be chosen appropriately in subsequent proofs. 
Under the regularity and positivity conditions \eqref{ass:coeff1}--\eqref{est:coeff-lower}, the local well-posedness result in 
\cite[Theorem 6.4]{SchSp2} furnishes the existence of a (small) maximal radius $r(T, \delta) \in  \big(0, r(T, \delta_0)\big]$
such that for all radii $r \in \big(0, r(T, \delta)\big]$ and initial data $E^{(0)}, H^{(0)} \in \big(\cH^3(\Omega)\big)^3$
satisfying the compatibility conditions \eqref{ass:comp} and the smallness assumption
\begin{equation}
    \label{est:data}
    \|E^{(0)}\|_{\cH^3(\Omega)}^{2} + \|H_0\|_{\cH^3(\Omega)}^{2} \le r^{2},
\end{equation}
there exists a unique classical solution $(E, H)\in G^3([0, T_{\maxi}))$ (see \eqref{local}),
to the quasilinear Maxwell system \eqref{EQUATION:MAXWELL} up to a maximal existence time $T_{\maxi} \in (T, \infty]$.
Further, the fields $(E, H)$ have  tangential traces in $\cH^3(\Gamma_{T'})$ for $T'<T_{\maxi}$ and satisfy the estimate
\begin{equation}
    \label{est:delta}
    \max_{k \in \{0, 1, 2, 3\}} \max_{t \in [0, T]} \Big(\big\|\partial_t^k E(t)\big\|_{\cH^{3-k}(\Omega)}^2 
                                                       + \big\|\partial_t^k H(t)\big\|_{\cH^{3-k}(\Omega)}^{2}\Big) \le \delta^2 \le 1
\end{equation}
along with the solenoidality condition
\begin{equation}
    \label{eq:div0}
    \div\big(\ep(\cdot, E(t)) E(t)\big) = 0, \qquad \div\big(\mu(\cdot, H(t)) H(t)\big) = 0 \qquad \text{ in } \Omega
\end{equation}
for all $t \in [0, T_{\maxi}) =: J_{\maxi}$. Here and in the sequel, we often view vector fields $E(t, \cdot)$, etc., 
as a Hilbert space element and write $E(t)$, etc.
As noted in the introduction, condition (\ref{eq:div0}), i.e.,  the third equation in \eqref{EQUATION:MAXWELL},
easily follows from the initial solenoidality assumption in Equation \eqref{EQUATION:INITIAL_SOLENOIDALITY}
and the first two lines of \eqref{EQUATION:MAXWELL}, cf.\ \cite[Lemma~7.25]{Sp0}.

The {\it a priori} bound \eqref{est:delta} will repeatedly be invoked throughout the article either explicitly or implicitly.
This inequality will prove essential for treating the nonlinearity as it both provides a uniform bound on the solution and furnishes its smallness.
Moreover, in view of assumption \eqref{est:delta}, it guarantees the ``hyperbolicity'' of the system by preserving the uniform positive definiteness 
of respective nonlinear tensors  $\varepsilon\big(\cdot, E(t)\big)$ and $\mu\big(\cdot, H(t)\big)$ everywhere in $\Omega$ for $t \in [0, T]$,
which is indispensable both for the local existence and the global stability of regular solutions.

Fixing now $T = 1$, we obtain $r(\delta) := r(1, \delta)$.
Given arbitrary initial data fields $\big(E^{(0)}, H^{(0)}\big)$ satisfying the comptability conditions \eqref{ass:comp}
and the smallness assumption \eqref{est:data}, we introduce  the final time 
\begin{equation}
    \label{def:T*}
    T_{*} = \sup\big\{T \in [1, T_{\maxi}) \,|\, \text{global bound } \eqref{est:delta} \text{ is valid for } t \in [0, T]\big\}.
\end{equation}
Hence, the estimate \eqref{est:delta} holds true on $[0, T_{*}) =: J_{*}$.
Unless $T_{*} < \infty$, the blow-up condition in  \cite[Theorem~6.4]{SchSp2} implies $T_{\maxi} > T_{*}$ and, therefore, by continuity provided by $G^{3}$,
\begin{equation}
    \label{eq:contr}
    z(T_{*}) := \max_{k \in \{0, 1, 2, 3\}} \Big(\big\|\partial_{t}^{k} E(T_{*})\big\|_{\cH^{3-k}(\Omega)}^{2}
                                               + \big\|\partial_{t}^{k} H(T_{*})\big\|_{\cH^{3-k}(\Omega)}^{2}\Big) = \delta^{2}.
\end{equation}

To study the regularity of $(E, H)$, we need to proceed to time-differentiated versions of Equation~\eqref{EQUATION:MAXWELL}.
To this end, letting
\begin{equation}
    \label{def:ep-mu-hat}
    \wh\ep_{k} = \begin{cases} 
                    \ep(\cdot, E), & k = 0, \\  
                    \ep^\D(\cdot, E), & k \in \{1, 2, 3\},
                 \end{cases} 
                 \qquad
    \wh\mu_{k} = \begin{cases} 
                    \mu(\cdot, H), & k = 0, \\ 
                    \mu^\D(\cdot, H), & k \in \{1, 2, 3\},
                 \end{cases}
\end{equation}
 we arrive at the  non-homogeneous system
\begin{align} 
    \notag
    \partial_{t} \big(\wh\ep_{k} \partial_{t}^{k} E\big) &= \phantom{-}\curl \partial_{t}^{k} H - \partial_{t} f_{k}, & t &\in J_{\maxi}, \ x \in \Omega, \\
    \label{eq:maxwell1}
    \partial_{t} \big(\wh\mu_{k} \partial_{t}^{k} H\big) &= -\curl \partial_{t}^{k} E - \partial_{t} g_{k}, & t &\in J_{\maxi}, \ x \in \Omega, \\
    \notag
    \tr_{t} \partial_{t}^{k} H - \tr_t (\lambda \tr_{t} \partial_{t}^{k} E) &= 0, & t & \in J_{\maxi}, \ x \in \Gamma,
\end{align}
for $k \in \{0, 1, 2, 3\}$, with the commutator terms
\begin{equation}
    \label{def:fk}
    \begin{split}
        f_{0} &= f_{1} = 0, \ \ f_{2} = \big(\partial_{t} \ep^\D(\cdot, E)\big) \partial_{t} E, \ \
        f_{3} = \big(\partial_{t}^{2} \ep^\D(\cdot, E)\big) \partial_{t} E + 2 \big(\partial_{t} \ep^\D(\cdot, E)\big) \partial_{t}^{2} E, \\
        g_{0} &= g_{1} = 0, \ \ g_{2} = \big(\partial_{t} \mu^\D(\cdot, H)\big) \partial_{t} H, \ \
        g_{3} = \big(\partial_{t}^{2} \mu^\D(\cdot, H)\big) \partial_{t} H + 2 \big(\partial_{t} \mu^\D(\cdot, H)\big) \partial_{t}^{2} H.
    \end{split}
\end{equation}
In a similar fashion, differentiating Equation~\eqref{eq:div0}, the  higher-order solenoidality properties
\begin{equation}
    \label{eq:div}
    \div \big(\ep^\D(\cdot, E) \partial_{t}^{k} E\big) = -\div f_{k} \quad\text { and } \quad
    \div \big(\mu^\D(\cdot, H) \partial_{t}^{k} H\big) = -\div g_{k}  \quad \text{ for } k \in \{1, 2, 3\}
\end{equation} 
emerge.
In the latter two equations \eqref{def:fk} and \eqref{eq:div}, the functions $\partial_t f_k$, $\partial_t g_k$, $\div f_k$ and $\div g_k$ belong
to $L^{\infty}\big(J_{*}, L^{2}(\Omega)\big)$ on the strength of the estimate \eqref{est:z} stated below.
For $k = 3$, the first equation in \eqref{EQUATION:MAXWELL} is interpreted in $\cH^{-1}(\Omega_T)$,
while the divergence operator in Equation~\eqref{eq:maxwell1} is viewed as a map from $\big(L^{2}(\Omega)\big)^{3}$ into $\cH^{-1}(\Omega)$.
Since the forcing terms belong to $L^{2}(\Omega)$, the traces in Equation~\eqref{eq:maxwell1} exist as $\cH^{-1/2}(\Gamma)$-distributions
(cf.\ \cite[Section~2.1]{Sp0}).

To facilitate energy estimates, we also consider the following equivalent version of Equation~\eqref{eq:maxwell1}:
\begin{align} 
    \notag
    \ep^\D(\cdot, E) \, \partial_{t} \big(\partial_{t}^{k} E\big) &= \phantom{-}\curl \partial_{t}^{k} H - \tilde{f}_{k}, & t &\in J_{\maxi}, \ x \in \Omega, \\
    \label{eq:maxwell2}
    \mu^\D(\cdot, H) \, \partial_{t} \big(\partial_{t}^{k} H\big) &= -\curl \partial_{t}^{k} E - \tilde{g}_{k}, & t &\in J_{\maxi}, \ x \in \Omega, \\
    \notag
    \tr_{t} \partial_{t}^{k} H - \tr_t( \lambda \tr_t \partial_{t}^{k} E) &= 0, & t &\in J_{\maxi}, \ x \in \Gamma,
\end{align}
for $k\in\{0, 1, 2, 3\}$ with the new commutator terms
\begin{equation}  \label{eq:tilde-fk} \begin{split}
\tilde{f}_{0} &=0, \qquad   \tilde{f}_{k} = \sum\nolimits_{j = 1}^{k} \binom{k}{j} \big(\partial_{t}^{j} \ep^\D(\cdot, E)\big) \partial_{t}^{k + 1 - j} E 
   \qquad \text{for } k \in \{1, 2, 3\}, \\
\tilde{g}_{0} &= 0 , \qquad     \tilde{g}_{k} = \sum\nolimits_{j = 1}^{k} \binom{k}{j} \big(\partial_{t}^{j} \mu^\D(\cdot, H)\big) \partial_{t}^{k + 1 - j} H 
   \qquad \text{for } k \in \{1, 2, 3\}.
\end{split} \end{equation}

Continuing, we define  higher-order ``energies'' 
\begin{align} 
    \notag
    e_{k}(t) &= \frac{1}{2} \max_{0 \leq j \leq k} \Big(\big\|\wh\ep_{k}^{1/2} \partial_{t}^{j} E(t)\big\|_{L^2(\Omega)}^{2} 
             + \big\|\wh\mu_{k}^{1/2} \partial_{t}^{j} H(t)\big\|_{L^2(\Omega)}^{2}\Big), \qquad e := e_{3}, \\
    \label{def:dez}
    d_{k}(t) &= \max_{0 \leq j \leq k} \big\| \lambda^{1/2}\tr_{t} \partial_{t}^{j} E(t)\big\|_{L^2(\Gamma)}^{2}, \qquad d := d_{3}, \\
    \notag    z_{k}(t) &= \max_{0 \leq j \le k} \Big(\big\|\partial_{t}^{j} E(t)\big\|_{\cH^{k-j}(\Omega)}^2 
     + \big\|\partial_{t}^{j} H(t)\big\|_{\cH^{k-j}(\Omega)}^{2}\Big), \qquad   z := z_{3}
\end{align}
of order $k \in \{0, 1, 2, 3\}$ for $t \in J_{\maxi}$. The ``weight'' tensors in the definition of $e_{k}$ and $d_k$  are introduced 
in light of the energy and observability-type estimates proved in the next section.
The ``natural'' higher-order energies $e_{k}$ arise from the basic energy associated with Equation~\eqref{EQUATION:MAXWELL} 
and (\ref{eq:maxwell2}), respectively, the expression $d_{k}$ measures the squared norms of boundary damping higher-order temporal derivatives,
while $z_{k}(t)$ combine both time- and space-higher-order squared norms, which induce the topology on $G^{3}$
 after taking the maximum over $t$.
Having shown a stabilizability estimate for $e$ and $d$ with an error term involving $z$, the goal will be to 
bound $z$ through $e$ by carefully exploiting the structure of the underlying dynamics.

We proceed with useful {\it a priori} estimates on various linear and nonlinear quantities involved in our future energy estimates.
To this end, here and in the sequel, let $c_{k}$ and $c$ denote positive constants independent of 
$t \in [0, T_{*})$, $T_{*}$, $\delta \in (0,\delta_{0}]$, $r\in \big(0, r(\delta)\big]$, and the ``admissible'' initial data
$\big(E^{(0)}, H^{(0)}\big)$ satisfying the conditions \eqref{ass:comp} and \eqref{est:data}.
Adopting the arguments from \cite{SchSp2}, similar to \cite{LaPoSch2018}, we can estimate
\begin{equation} 
    \label{est:z}
    \begin{split}
        \big\|\wh\ep_{k}(t)\big\|_{\infty}, \big\|\wh\mu_{k}(t)\big\|_{\infty}, \big\|\wh\ep_{k}^{-1}(t)\big\|_{\infty}, \big\|\wh\mu_{k}^{-1}(t)\big\|_{\infty} &\le c, \\
        \big\|\partial_{(t, x)}^{\alpha} \wh\ep_{j}(t)\big\|_{L^2(\Omega)}, \big\|\partial_{(t, x)}^{\alpha} \wh\mu_{j}(t)\big\|_{L^2(\Omega)} 
        &\le c\big(z_{k}^{1/2}(t) + \delta_{\alpha_{0} = 0}\big), \\
        \max_{2 \leq k \leq 3, \, 0 \leq j \leq 1} \Big(\big\|\partial_{t}^{j} f_{k}(t)\big\|_{\cH^{4-j-k}(\Omega)} 
        + \big\|\partial_{t}^{j} g_{k}(t)\big\|_{\cH^{4-j-k}(\Omega)}\Big) &\le cz(t), \\
        \big\|f_{2}(t)\big\|_{L^{2}(\Omega)}, \big\|g_{2}(t)\big\|_{L^{2}(\Omega)}, \big\|f_{3}(t)\big\|_{L^{2}(\Omega)}, \big\|g_{3}(t)\big\|_{L^{2}(\Omega)}
        &\le c e_{2}^{1/2}(t), \\
        \big\|\tilde f_{k}(t)\big\|_{\cH^{3-k}(\Omega)}, \big\|\tilde g_{k}(t)\big\|_{\cH^{3-k}(\Omega)} &\le c z(t)
    \end{split} 
\end{equation} 
for all $0 \leq j, k \leq 3$, $\alpha \in \NN_{0} \times \NN_{0}^{3}$ with $|\alpha| = k > 0$ and $t \in J_{*}$,
where, as stated before, the constants $c$ do not depend on $t$. 
The expression $\delta_{\alpha_{0} = 0}$ is the Kronecker delta defined as $\delta_{\alpha_{0} = 0} = 1$ if $\alpha_{0} = 0$ 
and $\delta_{\alpha_{0} = 0} = 0$ if $\alpha_{0} > 0$. 
The ``corrrection'' term ``$+c$'' on the right-hand side of the second line of Equation \eqref{est:z}
emerges from applying the $\partial^{\alpha}$-operator to the $x$-variable of $\ep(\cdot, E)$ or $\mu(\cdot, H)$.

As previously announced, the thrust of this paper is to prove that the Maxwell system \eqref{EQUATION:MAXWELL} subject to a Silver \& M\"uller-type boundary damping
admits a unique classical solution in the solution space $G^{3}$ defined in Equation \eqref{local} provided the initial data are sufficiently small
and satisfy appropriate regularity and compatibility conditions.
When studying a quasilinear dynamics in bounded domains, it is commonly recognized 
that the global well-posedness goes hand in hand with the exponential stability of classical solutions
(cf.\ \cite{LaPoSch2018, LPW, LaPoWa2018, MePoSH2007}, etc.).
Hence, a big part of the present paper is devoted to showing the desired stability property.

For our main result  we need two more assumptions. First, $\Omega$ has to be 
strictly star-shaped with respect to some point $x_{0} \in \mathbb{R}^{3}$, i.e., there exists a number $\ol{\eta} > 0$ such that
    \begin{equation}
        \label{EQUATION:STRICT_STAR_SHAPEDNESS}
        \nu \cdot m \geq \ol{\eta} > 0 \quad \text{on }  \Gamma \qquad \text{with \ }  m(x) = x - x_{0}.
    \end{equation}
Second, we suppose that the permittivity and permeability tensors satisfy the lower bounds
\begin{equation}\label{ass:coeff3}
   \ep(x,0) + (m(x)\cdot\nabla_x)\ep(x,0) \ge  \kappa\ep(x,0) \quad \text{ and } \quad   
   \mu(x,0) + (m(x)\cdot\nabla_x)\mu(x,0) \ge  \kappa\mu(x,0)
\end{equation}
for a constant $\kappa>0$ and all $x\in\ol{\Omega}$. Variants of this condition have  often been used in the nonhmogeneous linear and semilinear cases 
(see, e.g., \cite{AnPo2018, El07,NiPi2003, NiPi2004}). Of course, it is satisfied in the homogeneous case where the tensors do not depend on $x$.

We state our main result which is proved at the end of Section~\ref{SECTION:STABILIZABILITY}.
\begin{thm}
    \label{thm:main}
    Let $\Omega \subset \RR^{3}$ be a bounded, simply connected domain with a smooth boundary $\Gamma := \partial \Omega \in C^{5}$
    satisfying the strict star-shapedness condition \eqref{EQUATION:STRICT_STAR_SHAPEDNESS}. 
    Further, let the coefficient tensors $\ep$, $\mu$ and $\lambda$ satisfy the assumptions \eqref{ass:coeff1}, \eqref{ass:coeff2} and \eqref{ass:coeff3},
    and let the initial data $E^{(0)}, H^{(0)} \in \big(\cH^{3}(\Omega)\big)^{3}$
    fulfill the compatibility conditions \eqref{ass:comp} and the smallness assumption \eqref{est:data}. 
    Then there exists a radius $r > 0$ entering in assumption \eqref{est:data} and stability constants  $M, \omega > 0$ 
    such that the classical solution $(E, H) \in G^{3}$ of the Maxwell system \eqref{EQUATION:MAXWELL} uniquely exists for all times $t \ge 0$ 
    and exhibits an exponential decay rate
    \begin{equation*}
        \max_{0 \le j \le 3} \Big(\big\|\partial_{t}^{j} E(t)\big\|_{\cH^{3-j}(\Omega)}^{2} + \big\|\partial_{t}^{j} H(t)\big\|_{\cH^{3-j}(\Omega)}^{2}\Big) 
        \le M e^{-\omega t} \big\|\big(E^{(0)}, H^{(0)}\big)\big\|_{\cH^3(\Omega)}^{2} \quad \text{ for all } t \ge 0.
    \end{equation*}
\end{thm}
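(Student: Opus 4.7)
My plan is a standard continuation (barrier) argument built upon the nonlinear stabilizability inequality of Proposition~\ref{prop:z}. From the local theory of \cite{SchSp2} and definition \eqref{def:T*}, for every sufficiently small radius $r = r(\delta)$ the classical solution $(E,H) \in G^3$ exists on $J_* = [0, T_*)$ with $z(t) \le \delta^2$, and either $T_* = \infty$ (global existence) or the borderline identity $z(T_*) = \delta^2$ of \eqref{eq:contr} holds. The goal is to choose $\delta$ and then $r$ so small that the second alternative is impossible.

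First I would invoke Proposition~\ref{prop:z}, which combines the dissipativity estimate of Proposition~\ref{prop:energy}, the observability bound of Corollary~\ref{observe}, and the regularity boost of Proposition~\ref{PROPOSITION:REGULARITY_BOOST}. Schematically one expects an integrated inequality of the form
\begin{equation*}
z(t) + \omega_0 \int_0^t z(s) \dd s \le C_1 z(0) + C_2 \int_0^t z(s)^{3/2} \dd s
\end{equation*}
(or an equivalent pointwise differential variant) on $J_*$, with constants $C_1, C_2, \omega_0 > 0$ independent of $T_*$, $\delta$, $r$, and the admissible data. The superlinear remainder reflects the quasilinear structure through the nonlinear coefficients $\wh\ep_k, \wh\mu_k$ and the commutators $f_k, g_k, \tilde f_k, \tilde g_k$ controlled by $z$ via \eqref{est:z}.

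Since $z(s) \le \delta^2 \le 1$ on $J_*$, I can bound $z(s)^{3/2} \le \delta \, z(s)$ and absorb the superlinear term on the left whenever $\delta$ is so small that $C_2 \delta \le \omega_0/2$. A standard Gronwall argument then yields constants $M, \omega > 0$, independent of $T_*$ and $r$, with
\begin{equation*}
z(t) \le M e^{-\omega t} z(0) \le M' e^{-\omega t} r^2 \qquad \text{for all } t \in J_*,
\end{equation*}
where $z(0) \le M'' r^2$ follows from the compatibility formulas \eqref{def:comp} together with \eqref{est:data} and the smoothness of $\ep,\mu$. Now fix $\delta \in (0, \delta_0]$ as above and shrink $r = r(\delta) > 0$ further if needed so that $M' r^2 < \delta^2/2$. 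Then $z(t) < \delta^2/2$ throughout $J_*$, contradicting \eqref{eq:contr} whenever $T_* < \infty$. Hence $T_* = \infty$, the solution is global, and the exponential decay asserted in Theorem~\ref{thm:main} follows directly from the definition of $z$ in \eqref{def:dez}.

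The genuine obstacle does not lie in this final barrier step but in establishing Proposition~\ref{prop:z}, and in particular the regularity boost of Proposition~\ref{PROPOSITION:REGULARITY_BOOST}: the dissipation along the boundary only controls $e$ and $d$ in the $L^2$-topology, whereas $z$ encodes the full $\cH^3$-norm. Recovering the missing regularity requires combining a $\div$-$\curl$ elliptic estimate for the tangential components (exploiting the solenoidality in \eqref{eq:div0}, \eqref{eq:div} and the Silver--M\"uller boundary condition) with an inductive reconstruction of normal derivatives from the evolution equations \eqref{eq:maxwell1}, all with constants uniform in $T_*$. Once this regularity loop is closed, the barrier argument above runs on autopilot.
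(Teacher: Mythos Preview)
Your barrier argument is correct and matches the paper's proof almost verbatim: fix $\delta$ small enough to absorb the superlinear terms in Proposition~\ref{prop:z}, obtain a linear integrated inequality for $z$, use it to force $z(t)<\delta^2/2$ on $J_*$ and contradict \eqref{eq:contr}, then read off exponential decay from the resulting linear stabilizability estimate. Two cosmetic remarks: the actual inequality in Proposition~\ref{prop:z} carries $z^2$ terms (including a pointwise $z^2(t)$ on the right, not only an integrated term) rather than $z^{3/2}$, and the passage from the linear inequality $z(t)+\int_s^t z\le C z(s)$ (with \emph{variable} $s$) to exponential decay is the Datko--Pazy argument rather than Gronwall; neither point affects your logic.
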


%%%%%%%%%%%%%%%%%%%%%%%%%%%%%%%%%%%%%%%%%%%%%%%%%%%%%%%%%%%%%%%%%%%%%%%%%%
% Section 3: Energy and Observability-Type Estimates
%%%%%%%%%%%%%%%%%%%%%%%%%%%%%%%%%%%%%%%%%%%%%%%%%%%%%%%%%%%%%%%%%%%%%%%%%%

\section{Energy and Observability-Type Estimates}\label{sec:energy}

We first establish the basic dissipation of the system from the energy loss at the boundary through the Silver \& M\"uller-type 
boundary condition. This property will be expressed via the quantities $e_{k}$ and $d_{k}$ at all energy levels $0 \leq k \leq 3$.
In the proof we apply standard energy techniques.
\begin{prop}[Dissipativity inequality]
    \label{prop:energy}
Let the assumptions of Theorem \ref{thm:main} -- with the exception of the simple connectedness of $\Omega$ and \eqref{ass:coeff3} -- be satisfied.
    Then, for $0 \leq k \leq 3$ and $0 \leq s \leq t \leq T_{\ast}$, the energy estimate
    \begin{equation}
        \label{est:energy}
        e_{k}(t) + \int_{s}^{t} d_{k}(\tau) \dd \tau \leq e_{k}(s) + c_{1} \int_{s}^{t} z^{3/2}(\tau) \dd \tau
    \end{equation}
    holds true with positive constants $c_{1}$ and $\eta$ independent of $s, t$.
\end{prop}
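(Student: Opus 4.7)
The plan is to derive, for each temporal order $j\in\{0,1,2,3\}$, a pointwise-in-time identity of the form
\[
\tfrac{1}{2}\tfrac{d}{dt}\bigl(\|\wh\ep_k^{1/2}\partial_t^j E\|_{L^2}^2+\|\wh\mu_k^{1/2}\partial_t^j H\|_{L^2}^2\bigr)+\|\lambda^{1/2}\tr_t\partial_t^j E\|_{L^2(\Gamma)}^2=\mathcal{R}_j(t),
\]
with an error $|\mathcal{R}_j(t)|\le cz^{3/2}(t)$. Integrating in $\tau$ from $s$ to $t$ and taking the maximum over $0\le j\le k$ will then deliver \eqref{est:energy}.

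For $k=0$ (and hence $j=0$) I would test the original system \eqref{EQUATION:MAXWELL} with $(E,H)$, so that the natural weights $\wh\ep_0=\ep$ and $\wh\mu_0=\mu$ appear. For $k\ge 1$ and every $j\in\{0,\dots,k\}$ I would instead test the symmetrized time-differentiated system \eqref{eq:maxwell2} with $(\partial_t^j E,\partial_t^j H)$, which produces the weights $\wh\ep_k=\ep^{\D}$ and $\wh\mu_k=\mu^{\D}$ prescribed in \eqref{def:dez}. In either case, the symmetry assumption \eqref{ass:coeff2} yields $\langle A\partial_t u,u\rangle=\tfrac{1}{2}\partial_t\langle Au,u\rangle-\tfrac{1}{2}\langle(\partial_t A)u,u\rangle$, and summing the $E$- and $H$-identities the curl terms combine, via $\div(u\times v)=v\cdot\curl u-u\cdot\curl v$, into a single boundary integral $-\int_\Gamma(\partial_t^j E\times\partial_t^j H)\cdot\nu\dd x$.

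The key step is recognizing this boundary integral as the dissipation. The Silver--M\"uller condition, invariant under $\partial_t^j$ because $\lambda$ is time-independent, reads $\tr_t\partial_t^j H=-(\lambda\tr_t\partial_t^j E)\times\nu$. Using $(u\times v)\cdot\nu=u\cdot(v\times\nu)$, the tangentiality and symmetry of $\lambda$, and the scalar triple identity $a\cdot(b\times c)=-b\cdot(a\times c)$, the integrand reduces to $\lambda\tr_t\partial_t^j E\cdot\tr_t\partial_t^j E\ge\eta|\tr_t\partial_t^j E|^2$, producing exactly $\|\lambda^{1/2}\tr_t\partial_t^j E\|_{L^2(\Gamma)}^2$.

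The residual $\mathcal{R}_j$ collects (i) the weight-derivative terms $-\tfrac{1}{2}\langle(\partial_t\wh\ep_k)\partial_t^j E,\partial_t^j E\rangle$ and $-\tfrac{1}{2}\langle(\partial_t\wh\mu_k)\partial_t^j H,\partial_t^j H\rangle$, and (ii) the commutator contributions $-\langle\tilde f_j,\partial_t^j E\rangle-\langle\tilde g_j,\partial_t^j H\rangle$ from \eqref{eq:tilde-fk}. Both are controlled by \eqref{est:z}: the sup-norms $\|\partial_t\wh\ep_k\|_\infty,\|\partial_t\wh\mu_k\|_\infty$ are bounded via Sobolev embedding by $c\|\partial_t(E,H)\|_{\cH^2}\le cz^{1/2}$, while $\|\tilde f_j\|_{L^2},\|\tilde g_j\|_{L^2}\le cz$; pairing with $\|\partial_t^j(E,H)\|_{L^2}\le cz^{1/2}$ yields the superlinear bound $|\mathcal{R}_j|\le cz^{3/2}$. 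The main technical subtlety sits in the top-order case $j=3$, where $\partial_t^3(E,H)$ belongs only to $L^\infty(J_{*},L^2)$ and the boundary condition has to be interpreted as an identity of $\cH^{-1/2}(\Gamma)$-distributions as noted after \eqref{eq:div}; the energy identity at this level must be obtained either by a regularization/density argument or by reading \eqref{eq:maxwell2} in its weak formulation from \cite{SchSp2}, with the boundary integral interpreted as the duality pairing that still collapses to $\|\lambda^{1/2}\tr_t\partial_t^3 E\|_{L^2(\Gamma)}^2$ after applying Silver--M\"uller.
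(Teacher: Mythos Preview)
Your proposal is correct and follows essentially the same route as the paper: you test the (time-differentiated) system against $(\partial_t^j E,\partial_t^j H)$, use the symmetry of $\ep^\D,\mu^\D$ to extract the time derivative of the weighted $L^2$-norm, collapse the curl cross-terms to the boundary integral, and convert the latter into $\|\lambda^{1/2}\tr_t\partial_t^j E\|_{L^2(\Gamma)}^2$ via the Silver--M\"uller condition; the residual is then bounded by $cz^{3/2}$ using \eqref{est:z}. The only cosmetic difference is that the paper packages the energy identity for $k\ge 1$ (including the delicate top-order case $j=3$) into Lemma~\ref{LEMMA:BASIC_ENERGY_ESTIMATE}, quoted from the linear well-posedness theory of \cite{SchSp2}, which is precisely the ``weak formulation/regularization'' step you flag as needed when $\partial_t^3(E,H)$ is merely $L^2$-valued.
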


To streamline the exposition, we consider the following linear non-autonomous Maxwell system (motivated by \eqref{eq:maxwell2}).
Let $\alpha, \beta \in W^{1, \infty}\big(J, L^{\infty}(\Omega, \mathbb{R}^{3 \times 3}_{\mathrm{sym}})\big)$ with
$\alpha, \beta \geq \eta$, $\varphi, \psi \in L^{2}(\Omega_{T})^3$, $u^{(0)}, v^{(0)} \in \big(L^{2}(\Omega)\big)^{3}$.
By \cite[Proposition 3.1]{SchSp2}, the non-homogeneous problem
\begin{align}
    \notag
    \alpha \partial_{t} u&= \phantom{-}\curl v + \varphi & &\text{in } J \times \Omega, \\
    \notag
   \beta \partial_{t} v&= -\curl u + \psi & &\text{in } J \times \Omega, \\
    \label{EQUATION:LINEAR_INHOMOGENEOUS_MAXWELL_SYSTEM}
    v \times \nu + (\lambda (u \times \nu)) \times \nu &= 0 & &\text{in } J \times \Gamma, \\
    \notag
    u(0) = u^{(0)}, \quad v(0) &= v^{(0)} & &\text{in } \Omega
\end{align}
possesses a unique weak solution $(u, v) \in C^{0}\big([0, T], (L^{2}(\Omega))^6\big)$
with $(\tr_{\tau} u, \tr_{\tau} v) \in L^{2}(\Omega_{T}, \mathbb{R}^{6})$. See also the earlier paper \cite{CaEl2011} for a slight variant.
The latter extra regularity of the trace in time and space 
-- in addition to what one would expect from the mapping $\tr_{\tau} \colon L^{2}(\Omega) \to \cH^{-1/2}(\Gamma)$ --
is referred as ``hidden regularity'' and often occurs in hyperbolic problems such as the wave equation \cite{LaLiTri1986, LaTri1990, LaTri1991}, 
the dynamic equations of elasticity \cite{KuMaTu2013}, fluid/structure interaction systems \cite{ChuLa2012}, plate and beam equations \cite{LaMaMcDe2012, LaTou2010}, etc.  
Proposition~3.1 in \cite{SchSp2} also yields the following energy equality.
\begin{lemma}
    \label{LEMMA:BASIC_ENERGY_ESTIMATE}
    Under the above assumptions, for $0 \leq s \leq t \leq T$ we have
    \begin{align*}
        \big\|\alpha^{1/2}(T) u(T)\big\|_{L^{2}(\Omega)}^{2} &+ \big\|\beta^{1/2}(T) v(T)\big\|_{L^{2}(\Omega)}^{2} +
        2 \int_{s}^{t} \|\lambda \tr_t u \cdot \tr_t u\|_{L^{2}(\Gamma)}^{2} \dd \tau \\
        &= \big\|\alpha^{1/2}(0) u(0)\big\|_{L^{2}(\Omega)}^{2} + \big\|\beta^{1/2}(0) v(0)\big\|_{L^{2}(\Omega)}^{2} +
        2 \int_{s}^{t} \int_{\Omega} (u \cdot \varphi + v \cdot \psi) \dd x \dd \tau \\
        &\qquad+ \int_{s}^{t} \int_{\Omega} \big(u \cdot (\partial_{t} \alpha) u + v\cdot (\partial_{t} \beta) v\big) \dd x \dd \tau. 
        %- 2 \int_{s}^{t} \int_{\Gamma} h \cdot \tr_t u \dd x \dd \tau.
    \end{align*}
\end{lemma}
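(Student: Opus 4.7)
The plan is to derive the identity by a direct energy argument: test the first equation against $u$ and the second against $v$ in $L^{2}(\Omega)$, sum, and integrate by parts, combining the symmetry of $\alpha,\beta$ with the Silver \& M\"uller-type boundary condition to produce the dissipation term $\lambda\tr_{t}u\cdot\tr_{t}u$. Since the solution $(u,v)$ is only known to lie in $C^{0}([0,T],(L^{2}(\Omega))^{6})$, this formal computation must be justified by a density/regularization argument, for which the ``hidden'' regularity $\tr_{\tau}u,\tr_{\tau}v\in L^{2}(\Omega_{T},\RR^{6})$ supplied by Proposition~3.1 of \cite{SchSp2} is essential.

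Formally, assume enough regularity to allow pointwise-in-$\tau$ computations. Testing the first equation by $u$ and using $\alpha=\alpha^{\top}$ gives
\begin{equation*}
\tfrac{1}{2}\tfrac{\D}{\D\tau}\|\alpha^{1/2}u\|_{L^{2}}^{2}-\tfrac{1}{2}\int_{\Omega}(\partial_{\tau}\alpha)u\cdot u\dd x=\int_{\Omega}u\cdot\curl v\dd x+\int_{\Omega}\varphi\cdot u\dd x,
\end{equation*}
and the analogous identity for the second equation, with a $-\int v\cdot\curl u\dd x$ contribution. Adding the two and applying the divergence theorem to $u\times v$ produces
\begin{equation*}
\int_{\Omega}(u\cdot\curl v-v\cdot\curl u)\dd x=-\int_{\Gamma}u\cdot(v\times\nu)\dd x.
\end{equation*}
Substituting $v\times\nu=-(\lambda(u\times\nu))\times\nu$ from the boundary condition, and then using the elementary identity $a\cdot(b\times\nu)=-(a\times\nu)\cdot b$ together with $\lambda=\lambda^{\top}$, the boundary contribution becomes $-\int_{\Gamma}\lambda(u\times\nu)\cdot(u\times\nu)\dd x=-\int_{\Gamma}\lambda\tr_{t}u\cdot\tr_{t}u\dd x$. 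Integrating in $\tau$ from $s$ to $t$ and multiplying by $2$ yields precisely the claimed identity.

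For the actual weak solutions, the pointwise-in-time multiplication by $u,v$ is not directly legitimate since $\partial_{\tau}u,\partial_{\tau}v$ are only $\cH^{-1}$-distributions. The standard remedy is to mollify in time: convolving the system with a smooth kernel $\rho_{\ep}$ produces smooth-in-time approximants $u_{\ep}=\rho_{\ep}\ast u$ and $v_{\ep}=\rho_{\ep}\ast v$ that satisfy \eqref{EQUATION:LINEAR_INHOMOGENEOUS_MAXWELL_SYSTEM} up to commutator errors of the form $[\rho_{\ep}\ast,\alpha]\partial_{\tau}u$, which tend to zero in $L^{2}(\Omega_{T})$ as $\ep\to0$ thanks to $\alpha,\beta\in W^{1,\infty}(J,L^{\infty})$. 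The formal computation applies to $(u_{\ep},v_{\ep})$, and one then passes to the limit using $u_{\ep}\to u$ in $C^{0}(J,L^{2})$, together with $\tr_{t}u_{\ep}\to\tr_{t}u$ in $L^{2}(\Gamma_{T})$. I expect the principal obstacle to be exactly this last convergence of the boundary term, since $\tr_{t}$ is not continuous from $L^{2}(\Omega)$ to any boundary space and the whole argument stands or falls with the hidden regularity statement. Once Proposition~3.1 of \cite{SchSp2} is invoked to supply this boundary trace continuously in $\ep$, the limit of the dissipation integral is identified and the identity follows.
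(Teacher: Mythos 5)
Your formal computation is correct: testing with $u$ and $v$, exploiting $\alpha=\alpha^{\top}$, $\beta=\beta^{\top}$ to convert $\int_{\Omega}\alpha\partial_{\tau}u\cdot u\dd x$ into $\tfrac12\tfrac{\D}{\D\tau}\|\alpha^{1/2}u\|_{L^2}^2-\tfrac12\int_{\Omega}(\partial_{\tau}\alpha)u\cdot u\dd x$, integrating $\div(u\times v)$ to produce $-\int_{\Gamma}u\cdot(v\times\nu)\dd x$, and inserting the boundary condition via $a\cdot(b\times\nu)=-(a\times\nu)\cdot b$ reproduces exactly the claimed identity (modulo the paper's own typos: $T$ should read $t$, $0$ should read $s$, and the dissipation term is really $\int_{\Gamma}\lambda\tr_t u\cdot\tr_t u\dd x$, not a squared $L^2$-norm of that scalar). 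The paper, however, does not prove this lemma at all -- it obtains the energy equality directly as part of Proposition~3.1 of \cite{SchSp2}, whose well-posedness statement for the linear non-autonomous system already carries the identity along with the hidden trace regularity. So your route is genuinely different in presentation: you reconstruct the proof that the cited reference encapsulates. What your version buys is self-containedness; what it costs is that the regularization step has to be done carefully, and two points there deserve more than a gesture. First, mollifying in time on the bounded interval $[0,T]$ is problematic at the endpoints $s$ and $t$; one should either work on compactly contained subintervals and pass to the limit using $u,v\in C^{0}([0,T],L^2)$, or use one-sided mollifiers. Second, the commutator $\rho_{\ep}\ast(\alpha\partial_{\tau}u)-\alpha\partial_{\tau}(\rho_{\ep}\ast u)$ involves $\partial_{\tau}u$, which is only a distribution; its $L^2$-convergence to zero is the Friedrichs commutator lemma (applied after writing $\alpha\partial_{\tau}u=\partial_{\tau}(\alpha u)-(\partial_{\tau}\alpha)u$), not a routine consequence of $\alpha\in W^{1,\infty}$. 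Your identification of the boundary term's convergence as the crux is apt, but it is actually the easy part: time-mollification commutes with the (spatial) tangential trace, so $\tr_t u_{\ep}=\rho_{\ep}\ast\tr_t u\to\tr_t u$ in $L^2(\Gamma_T)$ once the hidden regularity is known. One should also record that the Green formula $\int_{\Omega}(u\cdot\curl v-v\cdot\curl u)\dd x=-\int_{\Gamma}u\cdot(v\times\nu)\dd x$ for $\cH(\curl)$-fields is legitimate here precisely because the integrand on $\Gamma$ equals $(u\times v)\cdot\nu$ and therefore only involves the tangential traces, both of which lie in $L^2(\Gamma_T)$. With these points supplied, your argument is a complete and correct alternative to the citation.
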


In view of \eqref{eq:maxwell2}, Proposition \ref{prop:energy} for $k\in\{1,2,3\}$ is a consequence of the bounds in \eqref{est:z}
and Lemma~\ref{LEMMA:BASIC_ENERGY_ESTIMATE} with $\alpha=\wh\ep_k$, $\beta=\wh\mu_k$, $\ph=-\tilde{f}_k$, and $\psi=-\tilde{g}_k$.
To show Proposition \ref{prop:energy} for $k=0$, we can use the ample regularity of our solution $(E,H)\in G^3$ to Maxwell system \eqref{EQUATION:MAXWELL}.
Using  the symmetry of the coefficients, inserting Equation \eqref{EQUATION:MAXWELL} and integrating by parts, we compute
\begin{align*}
    \partial_{t} &\int_{\Omega} \big(\wh\ep_0 E \cdot E + \wh\mu_0 H \cdot H\big) \dd x \\
    &= 2 \int_{\Omega} \Big(\partial_{t} \big(\wh\ep_0 E \big) \cdot E + \partial_{t} \big(\wh\mu_0 H\big) \cdot H\Big) \dd x 
    + \int_{\Omega} \Big(\wh\ep_0 E \cdot \big(\partial_{t} \wh\ep_0^{-1}\big) \wh\ep_0 E + \wh\mu_0 H \cdot \big(\partial_{t} \wh\mu_0^{-1}\big) \wh\mu_0 H\Big) \dd x \\
    &= 2 \int_{\Omega} (\curl H \cdot E - \curl E \cdot H) \dd x
    - \int_{\Omega} \Big(E \cdot \big(\partial_{t} \wh\ep_0\big) E + H \cdot \big(\partial_{t} \wh\mu_0\big) H\Big) \dd x \\
    &= -2 \int_{\Gamma} (H \times \nu) \cdot E \dd x 
    - \int_{\Omega} \Big(E \cdot \big(\partial_{t} \wh\ep_0\big) E + H \cdot \big(\partial_{t} \wh\mu_0\big) H\Big) \dd x \\
    &= 2 \int_{\Gamma} \big((\lambda(E \times \nu)) \times \nu\big) \cdot E \dd x
    - \int_{\Omega} \Big(E \cdot \big(\partial_{t} \wh\ep_0\big) E  + H \cdot \big(\partial_{t} \wh\mu_0\big) H\Big) \dd x \\
    &= -2 \int_{\Gamma} |\lambda ^{1/2} (E \times \nu)|^{2} \dd x
    -\int_{\Omega} \Big(E \cdot \big(\partial_{t} \wh\ep_0\big) E + H \cdot \big(\partial_{t} \wh\mu_0\big) H\Big) \dd x.
\end{align*}
We now integrate in time over $(s, t)$ with respect to $\tau$ and apply the estimates in \eqref{est:z}. 
Proposition~\ref{prop:energy} also follows for $k=0$, too.

Next, we aim to prove a nonlinear observability-type estimate involving the higher-order natural energies $e_{k}$ and the dissipation 
functionals $d_{k}$. To this end, we now have to assume that $\Omega$ is strictly star-shaped 
(viz.\ Equation \eqref{EQUATION:STRICT_STAR_SHAPEDNESS}) and that $\ep$ and $\mu$ satisfy the lower bound \eqref{ass:coeff3}. 

\begin{prop}[Observability-like inequality]
    \label{prop:lower}
    Under the assumptions of Theorem~\ref{thm:main}, there is a radius $\delta_1\in (0,\delta_0]$ such that for all $\delta\in(0,\delta_1]$
in    \eqref{est:delta}, the estimate
    \begin{equation*}
        \int_{s}^{t} e_k(\tau) \dd \tau \leq c_{2} \int_{s}^{t} d_{k}(\tau) \dd \tau + c_3 \big(e_{k}(t) + e_{k}(s)\big) + c_{4} \int_{s}^{t} z^{3/2}(\tau) \dd \tau
    \end{equation*}
    holds true for $k \in \{0, 1, 2, 3\}$, $0 \le s \le t < T_{*}$, and some constants $c_{j}$ independent of $s$ and $t$.
\end{prop}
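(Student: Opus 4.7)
The plan is to adapt the Rellich multiplier technique of \cite{El07} to the time-differentiated system \eqref{eq:maxwell2} at each level $k\in\{0,1,2,3\}$. Writing $u=\partial_t^k E$, $v=\partial_t^k H$, $\alpha=\wh\ep_k$, $\beta=\wh\mu_k$, I regard $(u,v)$ as a solution of
\[
\alpha\partial_t u=\curl v-\tilde f_k,\qquad \beta\partial_t v=-\curl u-\tilde g_k,
\]
with the tangential relation $\tr_t v+\tr_t(\lambda\tr_t u)=0$ on $\Gamma$. I plan to produce a single multiplier identity whose leading volume contribution reproduces $\int_s^t e_k(\tau)\,\D\tau$ while every remaining piece falls into one of the three admissible classes on the right-hand side.

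I would test the first equation by $m\times v$ and the second by $-m\times u$ (where $m(x)=x-x_0$), then integrate over $(s,t)\times\Omega$ and integrate by parts in both time and space. The vector identities
$(m\times v)\cdot\curl v=(m\cdot\nabla)\bigl(\tfrac12|v|^2\bigr)-\div\text{-terms}$
and $\div(a\times b)=b\cdot\curl a-a\cdot\curl b$, followed by a further integration by parts in $x$ on the $(m\cdot\nabla)$-terms, yield a representation of the form
\[
\int_s^t\!\!\int_\Omega\!\Bigl[(\alpha+(m\cdot\nabla_x)\alpha)u\cdot u+(\beta+(m\cdot\nabla_x)\beta)v\cdot v\Bigr]\D x\,\D\tau=[\text{endpoint}]_s^t+\!\int_s^t\!\!\int_\Gamma\!(\text{surface})+\!\int_s^t\!\!\int_\Omega\!(\text{errors}).
\]
Coercivity of the volume integrand is achieved via \eqref{ass:coeff3}: splitting $\alpha(x)=\ep(x,0)+[\wh\ep_k(x,E)-\ep(x,0)]$ (and similarly for $\beta$), the first summand satisfies the lower bound $\kappa\ep(x,0)$ by \eqref{ass:coeff3}, while the second is pointwise controlled by $C\|E(t)\|_\infty\le CC_S\delta$ owing to \eqref{est:delta} and the Sobolev embedding $\cH^2\hra L^\infty$. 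Choosing $\delta_1\in(0,\delta_0]$ sufficiently small makes the volume integrand dominate $\tfrac{\kappa}{2}(\alpha u\cdot u+\beta v\cdot v)$, producing the desired coercive contribution $\gtrsim\int_s^t e_k(\tau)\,\D\tau$.

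The endpoint terms have the shape $\int_\Omega (m\times v)\cdot\alpha u\,\D x$ evaluated at $s$ and $t$; by Cauchy--Schwarz and $\|m\|_\infty\le C$ they are bounded by $c_3(e_k(s)+e_k(t))$. The surface integrals, after substituting the boundary condition $\tr_t v=-\tr_t(\lambda\tr_t u)$ and writing vector products in terms of normal and tangential components, reduce to $\int_\Gamma(m\cdot\nu)\,\lambda\tr_t u\cdot\tr_t u\,\D x$ plus manifestly tangential cross-terms; strict star-shapedness \eqref{EQUATION:STRICT_STAR_SHAPEDNESS} together with $\lambda\ge\eta I$ keep these within $c_2\int_s^t d_k(\tau)\D\tau$ after Young's inequality. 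The remaining error sources are: (i) the commutator terms $\tilde f_k,\tilde g_k$, (ii) the time derivatives $\partial_t\alpha,\partial_t\beta$ produced by the time integration by parts, and (iii) the spatial derivatives of the nonlinear part of $\alpha,\beta$. By \eqref{est:z}, each of these has an $L^2$- or $L^\infty$-bound of order $z(\tau)$ or $z^{1/2}(\tau)$, so pairing them against factors controlled by $e_k^{1/2}(\tau)\le z^{1/2}(\tau)$ yields an aggregate error $\lesssim\int_s^t z^{3/2}(\tau)\,\D\tau$.

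The principal obstacle is the coercivity step in this anisotropic, nonhomogeneous, nonlinear setting: assumption \eqref{ass:coeff3} controls $\ep(\cdot,0),\mu(\cdot,0)$, but the Rellich identity forces the appearance of the solution-dependent tensors $\wh\ep_k,\wh\mu_k$; hence the smallness parameter $\delta_1$ must be chosen to absorb the perturbation uniformly in $t\in J_*$. A secondary technical point is the management of the boundary surface integrals, since the Rellich multiplier creates contributions involving normal traces of $u$ and $v$ which are not directly visible in $d_k$; these have to be re-expressed through the tangential boundary condition and the star-shapedness before Cauchy--Schwarz is applied, in order to avoid any uncontrolled normal-trace quantity on the right-hand side.
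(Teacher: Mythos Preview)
Your overall strategy---a Rellich multiplier applied to the time-differentiated system, with \eqref{ass:coeff3} supplying coercivity and the smallness \eqref{est:delta} absorbing the nonlinear perturbation of the coefficients---is indeed the paper's approach. However, the specific multiplier you name does not produce the identity you claim, and the key ingredient that forces the correct choice is absent from your outline: the divergence conditions \eqref{eq:div0}--\eqref{eq:div}.

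With the \emph{unweighted} multipliers $m\times v$ and $-m\times u$, integration by parts of $\int_\Omega(m\times v)\cdot\curl v\,\D x$ gives the volume term $\tfrac12\int_\Omega|v|^2\,\D x$ (no $\beta$ or $(m\cdot\nabla)\beta$ appears) together with the term $\int_\Omega(m\cdot v)\,\div v\,\D x$. The latter is the obstruction: the solenoidality relations control $\div(\wh\ep_k\partial_t^kE)$ and $\div(\wh\mu_k\partial_t^kH)$, not $\div\partial_t^kE$ or $\div\partial_t^kH$, and for genuinely matrix-valued $\wh\ep_k,\wh\mu_k$ one cannot pass from one to the other modulo lower order. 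The paper therefore differentiates the functional $\int_\Omega(m\times\alpha u)\cdot\beta v\,\D x$ in time against the system in the form \eqref{eq:maxwell1}; this \emph{weighted} choice makes the curl term $\int_\Omega\curl v\cdot(m\times\beta v)\,\D x$, whose expansion via $\curl(m\times\beta v)=\div(\beta v)\,m-2\beta v-(m\cdot\nabla)(\beta v)$ produces exactly $\int_\Omega(m\cdot v)\,\div(\beta v)\,\D x$ (controlled by \eqref{eq:div} and \eqref{est:z}) and the volume integrand $\tilde\beta v\cdot v=(\beta+(m\cdot\nabla)\beta)v\cdot v$, for which \eqref{ass:coeff3} then yields coercivity after the $O(\delta)$ correction. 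In short, the displayed identity you wrote is correct only for the weighted multiplier; assumption \eqref{ass:coeff3} is precisely the price of that weighting, and the weighting is forced by the anisotropy through the divergence relations you omitted. A secondary correction: the boundary contribution $\tfrac12\int_\Gamma(m\cdot\nu)\,\tr(\alpha u\cdot u+\beta v\cdot v)\,\D x$ involves the \emph{full} traces and, by \eqref{EQUATION:STRICT_STAR_SHAPEDNESS}, is a nonnegative term retained on the left; it absorbs (via Young) the mixed boundary cross-terms, leaving only $\int_\Gamma|u\times\nu|^2\,\D x\lesssim d_k$ on the right.
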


To prove Proposition \ref{prop:lower} for $k\ge1$, we again look at a non-autonomous Maxwell system. Since now the divergence conditions are involved, 
it is more convenient to use the system
\begin{align}
    \notag
    \partial_{t}(\alpha  u)&= \phantom{-}\curl v + \partial_t\varphi & &\text{in } J \times \Omega, \\
    \notag
    \partial_{t}(\beta  v)&= -\curl u + \partial_t \psi & &\text{in } J \times \Omega, \\
    \label{EQUATION:LINEAR_INHOMOGENEOUS_MAXWELL_SYSTEM1}
    v \times \nu + (\lambda (u \times \nu)) \times \nu &= 0 & &\text{in } J \times \Gamma, \\
    \notag
    u(0) = u^{(0)}, \quad v(0) &= v^{(0)} & &\text{in } \Omega,
\end{align}
which better matches \eqref{eq:maxwell1}.
We take coefficients $\alpha, \beta \in C^1\big(J\times\overline{\Omega}, \mathbb{R}^{3 \times 3}_{\mathrm{sym}})\big)$ with
$\alpha, \beta \geq \eta$ and data $\varphi, \psi \in C^1([0,T], L^2(\Omega)^3)\cap  C([0,T], \cH^1(\Omega)^3)=G^1$
and $u^{(0)}, v^{(0)} \in \big(L^{2}(\Omega)\big)^3$ such that  $\div\ph, \div \psi\in C(J,L^2(\Omega))$,
$\div (\alpha(0) u^{(0)})= \div \ph(0)$, and  $\div(\beta(0) v^{(0)})=\div \psi(0)$. % belong to  $L^2(\Omega)$. 
Proposition~1.1 of \cite{CaEl2011} or Proposition 3.1 of \cite{SchSp2}  provide  a unique weak solution $(u, v) \in C^{0}\big([0, T], (L^{2}(\Omega))^6\big)$
of \eqref{EQUATION:LINEAR_INHOMOGENEOUS_MAXWELL_SYSTEM1} with $(\tr_{\tau} u, \tr_{\tau} v) \in L^{2}(\Omega_{T}, \mathbb{R}^{6})$. We will see below that
our assumptions and Theorem~1.2 of \cite{CaEl2011} imply that the full traces  $(\tr u, \tr v)$ belong to $L^{2}(\Omega_{T}, \mathbb{R}^{6})$.
We further assume that
\begin{align}\notag
    &\|\alpha\|_{L^{\infty}}, \|\beta\|_{L^{\infty}}, \|\lambda\|_{L^{\infty}} \leq M,\qquad \text{ for some } M> 0,\\
    \label{EQUATION:DEFINITION_ALPHA_TILDE_BETA_TILDE}
    &\tilde{\alpha} := \alpha + (m \cdot \nabla) \alpha \geq \tfrac{\kappa\alpha}{2} \geq \tfrac{\kappa\eta}{2}\,, \quad \text{ and } \quad
    \tilde{\beta}  := \beta  + (m \cdot \nabla) \beta  \geq \tfrac{\kappa\beta}{2}  \geq \tfrac{\kappa\eta}{2}
\end{align}
for some constants $\eta,\kappa>0$,  which are given by \eqref{ass:coeff2} and \eqref{ass:coeff3} later on.

\begin{lemma}\label{LEMMA:LINEAR_BOUNDARY_OBSERVABILITY}
Besides the assumptions stated above, we also require that the domain $\Omega$ is strictly star-shaped 
(viz.\ Equation~\eqref{EQUATION:STRICT_STAR_SHAPEDNESS}). Let  $(u, v) \in C^{0}\big([0, T], L^{2}(\Omega, \RR^{6})\big)$
be the weak solution of Equation \eqref{EQUATION:LINEAR_INHOMOGENEOUS_MAXWELL_SYSTEM1}. 
Then the traces  $(\tr u, \tr v)$ belong to $L^{2}(\Omega_{T}, \mathbb{R}^{6})$ and the fields $(u,v)$ fulfill the estimate
\begin{align}\label{est:obs}
  \frac{\kappa}{4}& \int_{s}^{t} \int_{\Omega} \big(\alpha u \cdot u + \beta v \cdot v\big) \dd x \dd \tau
     + \frac{\ol{\eta}}{4} \int_{s}^{t} \int_{\Gamma} \tr\big(\alpha u \cdot u + \beta v \cdot v\big) \dd x \dd \tau \\
  &\leq \frac{\|m\|_{L^{\infty}}^{2}}{\ol{\eta}} M (1 + M)^{2} \int_{s}^{t} \int_{\Gamma} |u \times \nu|^{2} \dd x \dd \tau 
   +\frac{M\|m\|_{L^{\infty}}}{2}\big(\|(u(t),v(t))\|_{L^{2}}^{2}  + \|(u(s),v(s))\|_{L^{2}}^{2} \big) \notag\\
& \qquad+ \|m\|_{L^{\infty}} \int_{s}^{t} \int_{\Omega} \big(M (|\partial_t\varphi| \cdot |v| + |\partial_t\psi| \cdot |v|)
         +\big(|\div\ph| \, |u| + |\div\psi| \, |v|\big) \dd x \dd \tau\notag
    \end{align}
 for $0 \leq s \leq t \leq T$.
\end{lemma}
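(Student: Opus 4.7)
Plan. The estimate is an observability inequality of Rellich--Pohozaev type for the linear anisotropic Maxwell system; I would prove it by the classical multiplier method with the radial vector field $m(x) = x - x_{0}$, in the spirit of Eller \cite{El07} and its nonhomogeneous variants \cite{NiPi2003, NiPi2004, AnPo2018}. By the linear well-posedness theory cited from \cite{CaEl2011, SchSp2}, a density argument reduces matters to smooth and compatible data, for which the solution $(u, v)$ is classical enough to justify all subsequent integration by parts; the claimed $L^{2}(\Omega_{T})$-regularity of the full traces then follows from Theorem~1.2 of \cite{CaEl2011} applied to the linearized system.

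\smallskip

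\emph{Core multiplier identity.} I would apply Rellich-type multipliers of the form $m \times v$ (or $(m \cdot \nabla) v$) to the first equation of \eqref{EQUATION:LINEAR_INHOMOGENEOUS_MAXWELL_SYSTEM1} and $m \times u$ (or $(m \cdot \nabla) u$) to the second, with appropriate signs, and integrate the resulting identity over $(s, t) \times \Omega$. The time derivatives $\partial_{\tau}(\alpha u)$ and $\partial_{\tau}(\beta v)$, after time integration by parts, produce boundary-in-time contributions controlled by $\frac{M \|m\|_{\infty}}{2}\bigl(\|(u, v)(t)\|_{L^{2}}^{2} + \|(u, v)(s)\|_{L^{2}}^{2}\bigr)$ via Cauchy--Schwarz and Young, while the forcings $\partial_{t} \varphi, \partial_{t} \psi$ generate the $|\partial_{t}\varphi||v|, |\partial_{t}\psi||v|$ entries in the last line of the target estimate. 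The spatial manipulations combine Green's formula for the curl, the triple-product identity $a \cdot (b \times c) = b \cdot (c \times a)$, the vector identity $\nabla(u \cdot v) = u \times \curl v + v \times \curl u + (u \cdot \nabla) v + (v \cdot \nabla) u$, and a further integration by parts using $\div m = 3$, to produce the symmetric bulk integrand $\tilde{\alpha} u \cdot u + \tilde{\beta} v \cdot v$. By \eqref{EQUATION:DEFINITION_ALPHA_TILDE_BETA_TILDE}, this is bounded below by $\tfrac{\kappa}{2}(\alpha u \cdot u + \beta v \cdot v)$; the coefficient $\tfrac{\kappa}{4}$ in the claim arises after a factor $\tfrac{1}{2}$ is absorbed by the cross-term estimate at the boundary.

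\smallskip

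\emph{Boundary and divergence contributions; main obstacle.} The spatial integration by parts also generates the ``good'' boundary integral $\int_{\Gamma} (\nu \cdot m)(\alpha u \cdot u + \beta v \cdot v)\, dS$, which by strict star-shapedness $\nu \cdot m \geq \bar{\eta} > 0$ carries the positive coefficient $\bar{\eta}$ on $\tr(\alpha u \cdot u + \beta v \cdot v)$, together with cross-terms of the form $\int_{\Gamma} (m \times u) \cdot (v \times \nu)\, dS$. For the latter I would use the Silver--M\"uller condition $v \times \nu = -(\lambda(u \times \nu)) \times \nu$ with $\|\lambda\|_{\infty} \leq M$ to express $v \times \nu$ as a bounded linear image of $u \times \nu$, then apply Young's inequality weighted by $\bar{\eta}$ to absorb half of the good boundary term (yielding the coefficient $\bar{\eta}/4$) and obtain the damping term with exactly the prefactor $\tfrac{\|m\|_{\infty}^{2}}{\bar{\eta}} M(1 + M)^{2}$. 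The divergence error terms $\int |\div \varphi| |u|$ and $\int |\div \psi| |v|$ emerge from integration by parts of expressions like $\int_{\Omega} (m \cdot u)\div(\alpha u)\, dx$, where one replaces $\div(\alpha u) = \div \varphi$ (and analogously $\div(\beta v) = \div \psi$) via the divergence compatibility that propagates in time from the initial compatibility assumed in the lemma. The main difficulty is the boundary bookkeeping: one has to decompose $u$ and $v$ on $\Gamma$ into tangential and normal parts, couple them through the tensor-valued Silver--M\"uller condition, and balance the resulting quadratic form against the $(\nu \cdot m)$-weighted positive term so that precisely the constants $\bar{\eta}/4$ and $\|m\|_{\infty}^{2} M(1 + M)^{2}/\bar{\eta}$ emerge. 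A secondary technical point is the anisotropy and inhomogeneity of the tensors $\alpha, \beta$: these introduce the terms $(m \cdot \nabla)\alpha$ and $(m \cdot \nabla)\beta$ in the Rellich identity, which are exactly handled by the lower bound \eqref{EQUATION:DEFINITION_ALPHA_TILDE_BETA_TILDE}.
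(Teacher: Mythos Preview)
Your proposal is correct and matches the paper's approach: regularize via the well-posedness theory of \cite{CaEl2011, SchSp2} to reduce to $G^{1}$-solutions, then apply the Rellich multiplier associated with $m(x)=x-x_0$, extract the bulk term $\tfrac12\int(\tilde{\alpha}u\cdot u+\tilde{\beta}v\cdot v)$ and the boundary term $\tfrac12\int_\Gamma(\nu\cdot m)(\alpha u\cdot u+\beta v\cdot v)$, use the Silver--M\"uller condition plus Young's inequality with parameter $\theta=\bar{\eta}/4$ to control the boundary cross-terms, and invoke the propagated identities $\div(\alpha u)=\div\varphi$, $\div(\beta v)=\div\psi$.

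Two minor sharpenings: (i) the paper does not alternate between two multipliers but differentiates the single functional $\int_\Omega (m\times\alpha u)\cdot\beta v\,\mathrm{d}x$ in time, so the relevant vector identity is $\curl(m\times w)=(\div w)\,m-2w-(m\cdot\nabla)w$ rather than the gradient-of-dot-product formula you mention; (ii) the factor $\kappa/4$ arises directly from the $\tfrac12$ produced by the integration-by-parts step combined with $\tilde{\alpha}\ge\tfrac{\kappa}{2}\alpha$, not from absorbing a boundary cross-term.
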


\begin{proof} 
 We let $s = 0$ without loss of generality.

 1) We  start by regularizing the data. To this end, we first look at the homogeneous autonomous version of \eqref{EQUATION:LINEAR_INHOMOGENEOUS_MAXWELL_SYSTEM1}
 with $\ph=\psi=0$, time-independent coefficients $\alpha(0)$ and $\beta(0)$, and initial ``charges'' $\div (\alpha(0) u^{(0)}),\div(\beta(0) v^{(0)})\in L^2(\Omega)$. 
 In this case  the coefficients commute with the time derivative.
 Theorem~1.2 of \cite{CaEl2011} implies that the weak solutions $(\tilde u,\tilde v)$ of 
  \eqref{EQUATION:LINEAR_INHOMOGENEOUS_MAXWELL_SYSTEM1} with these coefficients and data form a $C_0$--semigroup on the space
  \[ X=\{ (u_0,v_0)\in L^2(\Omega)^6\,|\, \div (\alpha(0) u_0),\div(\beta(0)v_0)\in L^2(\Omega)\}\]
 endowed with the norm given by $\|u_0\|_2^2+\|v_0\|_2^2 +  \|\div (\alpha(0) u_0)\|_2^2+ \|\div(\beta(0)v_0)\|_2^2$. (We note that 
 $ \div (\alpha(0) \tilde u(t))= \div (\alpha(0) u_0)$ and $\div(\beta(0)\tilde v(t)) =\div (\alpha(0) u_0)$ for all $t\ge0$. Moreover, one has to use the remarks
  at the end of the first section  of \cite{CaEl2011}, to extend the results of this paper from scalar $\lambda$ to positive definite and bounded ones.)
  This semigroup has a generator $A$ whose domain $D(A)$ is dense in $X$. For data $(u^{(0)}, v^{(0)})\in D(A)$ the solution $(\tilde u,\tilde v)$ is an element of
  $C^1([0,\infty),X)$, and hence $(\tilde u(t),\tilde  v(t))$  is an element of 
  $\cH(\curl)^2$ by the evolution equations. Since these fields  also satisfy the boundary condition in 
 \eqref{EQUATION:LINEAR_INHOMOGENEOUS_MAXWELL_SYSTEM1},   Lemma~\ref{prop:div-curl} shows that $(\tilde u,\tilde v)$ is contained in $C([0,\infty),\cH^1(\Omega)).$ 
 (Of course, the proof of this lemma is independent of the present one.) In particular, the initial values  $(\tilde u(0),\tilde v(0))\in (A)$ 
 are contained in $\cH^1(\Omega)$ and fulfill the  boundary condition.
 
 Let now $(u^{(0)}, v^{(0)})$, $\ph$, and $\psi$ be given as in the statement, and let $(u,v)\in C\big([0, T], L^{2}(\Omega)\big)$ be the solution of 
 \eqref{EQUATION:LINEAR_INHOMOGENEOUS_MAXWELL_SYSTEM1}.  In view of the previous paragraph, there are 
functions $(u_n^{(0)}, v_n^{(0)})$ in $\cH^1(\Omega)$ that satisfy the third line of \eqref{EQUATION:LINEAR_INHOMOGENEOUS_MAXWELL_SYSTEM1}
and tend to $(u^{(0)}, v^{(0)})$ in $X$.  We can also construct maps $\ph_n,\psi_n\in C^2(\ol{\Omega_T})$ which  tend to 
 $\ph$ respectively $\psi$ in $G^1$.  Theorem~1.3 in \cite{CaEl2011} then provides solutions $(u_n, v_n) \in G^1$  of 
 \eqref{EQUATION:LINEAR_INHOMOGENEOUS_MAXWELL_SYSTEM1}. 
 Theorem~1.2 in \cite{CaEl2011} now shows that $(u_n, v_n)$ and  $(\tr u_n, \tr v_n)$ tend to 
 $(u, v)$ and  $(\tr u, \tr v)$ in $C\big([0, T], L^{2}(\Omega)\big)$ respectively $L^2(\Gamma_T)$. Below we establish the asserted estimate 
for $(u_n,v_n)$, so that the claims will then follow by approximation.

 2) We may thus assume that $(u, v)$ belongs to $G^1$ and use the given data. 
 We set $\Omega_{t} := (0, t) \times \Omega$ and $\Gamma_{t} := (0, t) \times \Gamma$.
    Now, in view of Equation \eqref{EQUATION:LINEAR_INHOMOGENEOUS_MAXWELL_SYSTEM1}, we can write
\begin{align}      
J &:= \int_{\Omega} \partial_{t} \big((m \times \alpha u) \cdot \beta v\big) \dd x 
        \label{EQUATION:OBSERVABILITY_ESTIMATE_FUNCTIONAL_I_DIFFERENTIATED}\\
&= \int_{\Omega} \Big(\big(m \times (\curl v + \partial_t \varphi)\big) \cdot \beta v 
  + (m \times \alpha u) \cdot (-\curl u + \partial_t\psi)\Big) \dd x \notag\\
 &= -\int_{\Omega} \curl v \cdot (m \times \beta v) \dd x - \int_{\Omega} \curl u \cdot (m \times \alpha u) \dd x 
    +\int_{\Omega} \big(m \times \partial_t\varphi) \cdot \beta v + (m \times \alpha u) \cdot \partial_t\psi\big) \dd x.\notag
    \end{align}
    Next, we proceed as in \cite[Equations (3.6)--(3.8)]{El07}. First, we obtain
    \begin{align*}
        \int_{\Omega} \curl v \cdot (m \times \beta v) \dd x
        = \int_{\Omega} v \cdot \curl(m \times \beta v) \dd x + \int_{\Gamma} (\nu \times v) \cdot (m \times \beta v) \dd x
    \end{align*}
and compute
    \begin{align*}
        \curl(m \times \beta v) &= \div (\beta v) m + (\beta v \cdot \nabla) m - \div(m) \beta v - (m \cdot \nabla) (\beta v) \\
        &= \div(\beta v) m - 2\beta v - (m \cdot \nabla) (\beta v).
    \end{align*}
    Applying $\div$ the second equation in (\ref{EQUATION:LINEAR_INHOMOGENEOUS_MAXWELL_SYSTEM}) and integrating in time, it follows
    \[ \div(\beta(t) v(t)) = \div(\beta(0) v^{(0)}) + \div \psi(t) - \div\psi(0)=  \div \psi(t) .\]
     Integrating over $(0, t)$ and perfoming partial integration, we arrive at
    \begin{align*}
        \int_{\Omega_{t}} \curl v \cdot (m \times \beta v) \dd(\tau, x) &=
        -\frac{1}{2} \int_{\Omega_{t}} |\tilde{\beta}^{1/2} v|^{2}  \dd(\tau, x) 
         - \frac{1}{2} \int_{\Gamma_{t}} (\nu \cdot m) (\beta v \cdot v) \dd(\tau, x) \\
        &\qquad + \int_{\Omega_{t}} m \cdot v \div\psi \dd(\tau, x) + \int_{\Gamma_{t}} (\nu \times v) \cdot (m \times \beta v) \dd(\tau, x).
    \end{align*}
    (Recall the definition of $\tilde{\alpha}$ and $\tilde{\beta}$ in Equation \eqref{EQUATION:DEFINITION_ALPHA_TILDE_BETA_TILDE}.)
    We proceed analogously with $u$. Invoking the identity (\ref{EQUATION:OBSERVABILITY_ESTIMATE_FUNCTIONAL_I_DIFFERENTIATED})
    and using the boundary condition in (\ref{EQUATION:LINEAR_INHOMOGENEOUS_MAXWELL_SYSTEM1}), we get
    \begin{align*}
\int_{\Omega} &\big(m \times \alpha(t) u(t)\big) \beta(t) v(t) \dd x - \int_{\Omega} \big(m \times \alpha(0) u(0)\big) \beta(0) v(0) \dd x
        = \int_{0}^{t} J \dd \tau \\
&= \frac{1}{2} \int_{\Omega_{t}} \big(\tilde{\alpha} u\cdot u + \tilde{\beta} v \cdot v\big) \dd(\tau, x)
  + \frac{1}{2} \int_{\Gamma_{t}} (\nu \cdot m) \tr(\alpha u \cdot u + \beta v \cdot v) \dd(\tau, x) \\
&\quad- \int_{\Gamma_{t}} \big((\nu \times u) \cdot (m \times \alpha u) + (\lambda( u\times \nu) \times \nu) \cdot(m \times \beta v)\big) \dd(\tau, x) \\
&\quad+ \int_{\Omega_{t}} \Big((m \times \partial_t\varphi) \cdot \beta v 
   + (m \times \alpha u) \cdot \partial_t\psi - \big((m \cdot u \div\ph + m \cdot v \div\psi\Big) \dd(\tau, x).
    \end{align*}
By virtue of the strict star-shapedness condition \eqref{EQUATION:STRICT_STAR_SHAPEDNESS}
    and the assumption \eqref{EQUATION:DEFINITION_ALPHA_TILDE_BETA_TILDE}, we now obtain
    \begin{align*}
        \frac{\kappa}{4} \int_{\Omega_{t}}& \big(\alpha u \cdot u + \beta v \cdot v) \dd(\tau, x)
        + \frac{\ol{\eta}}{2} \int_{\Gamma_{t}} \tr(\alpha u \cdot u + \beta v \cdot v) \dd(\tau, x) \\
        &\leq \frac{\|m\|_{L^{\infty}}}{2} M^{2} \big(\|u(t)\|_{L^{2}}^{2} + \|v(t)\|_{L^{2}}^{2} + \|u(0)\|_{L^{2}}^{2} + \|v(0)\|_{L^{2}}^{2}\big) \\
        &\quad + \frac{M\|m\|_{L^{\infty}}^2}{4} (1 + M)^{2} \frac{1}{\theta} \int_{\Gamma_{t}} |u \times \nu|^{2} \dd(\tau, x)
        + \theta \int_{\Gamma_{t}} \tr(\alpha u \cdot u + \beta v \cdot v) \dd(\tau, x) \\
        &\quad+ \|m\|_{L^{\infty}} \int_{\Omega_{t}} \Big[M\big(|\partial_t\varphi| \,|v| + |\partial_t\psi| \, |u|\big) +
        \big(|\div\ph| \, |u| + |\div\psi| \, |v|\big)\Big] \D(\tau, x),
    \end{align*}
    where we applied Young's inequality. 
    Selecting $\theta = \ol{\eta}/4$, the assertion follows.
\end{proof}
 
We are now able to prove Proposition~\ref{prop:lower}.  First, let $k\in\{1,2,3\}$. 
 By continuity and \eqref{ass:coeff3}, we can choose a number $\delta_1 \in (0, \delta_{0}]$ such that  the tensors $\alpha :=\wh\ep_k=\ep^\D(\cdot,E)$ 
 and $\beta := \wh\mu_k=\mu^\D(\cdot,H)$ 
 satisfy the conditions \eqref{EQUATION:DEFINITION_ALPHA_TILDE_BETA_TILDE} for all $\delta \in (0, \delta_1]$ in \eqref{est:delta}.
Further, let $\varphi := -f_{k}$ and $\psi := - g_{k}$ for the functions $f_{k}$ and $g_{k}$ defined in Equation~\eqref{def:fk}. 
As in \eqref{est:z}, one can see that they belong to $G^1$ since $(E,H)\in G^3$.
Equation \eqref{eq:div} at $t=0$  yields $\div (\alpha(0) u^{(0)})= \div \ph(0)$ and  $\div(\beta(0) v^{(0)}) =\div \psi(0)$.
So we can apply  Lemma~\ref{LEMMA:LINEAR_BOUNDARY_OBSERVABILITY}. Combined with Equations \eqref{eq:div} and \eqref{est:z}, it implies
the assertion of  Proposition~\ref{prop:lower} for $k\ge 1$. The case $k=0$ can be treated in the same way with $\ph=\psi=0$, but here one 
does not need a regularization step since $(E,H)\in G^3$.

Arguing as in the proof of \cite[Corollary 3.5]{LaPoSch2018}, Proposition~\ref{prop:energy} and \ref{prop:lower}
furnish the following ``incomplete'' nonlinear stabilizability inequality for time-like higher-order energies $e_{k}$.
\begin{cor}
    \label{observe}
    Suppose the conditions of Theorem \ref{thm:main} hold true and that $\delta\in(0,\delta_1]$ in \eqref{est:delta} with the number
    $\delta_1$ from Proposition~\ref{prop:lower}.
    Then we have the estimate 
    \begin{equation}
        \label{EQUATION:INCOMPLETE_STABILIZABILITY_ESTIMATE}
        e_{k}(t) + \int_{s}^{t} e_{k}(\tau) \dd \tau \leq C_{1} e_{k}(s) + C_{2} \int_{s}^{t} z^{3/2}(\tau) \dd \tau
    \end{equation}
    for $0 \leq s \leq t < T_{*}$ and $k \in \{0, 1, 2, 3\}$, where the constants $C_{k}$ depend neither on $t$ nor on $s$.
\end{cor}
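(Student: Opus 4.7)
The plan is to derive the stabilizability estimate by combining Proposition~\ref{prop:energy} and Proposition~\ref{prop:lower} in a direct algebraic fashion. The hypotheses of Theorem~\ref{thm:main} (including strict star-shapedness and \eqref{ass:coeff3}) together with the restriction $\delta\in(0,\delta_1]$ ensure that both inequalities are applicable with constants $c_1,c_2,c_3,c_4$ independent of $s$ and $t$.

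As a first step, I would read off two consequences of the dissipativity inequality \eqref{est:energy}. Pointwise in $t$, dropping the nonnegative integral of $d_k$ on the left gives
\begin{equation*}
 e_k(t)\le e_k(s) + c_1\int_s^t z^{3/2}(\tau)\dd\tau,
\end{equation*}
while rearranging gives
\begin{equation*}
 \int_s^t d_k(\tau)\dd\tau\le e_k(s)-e_k(t) + c_1\int_s^t z^{3/2}(\tau)\dd\tau.
\end{equation*}

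Next, I would substitute this second bound for $\int_s^t d_k\dd\tau$ into the right-hand side of the observability-like inequality of Proposition~\ref{prop:lower}. This eliminates the dissipation integral and yields, after collecting terms,
\begin{equation*}
 \int_s^t e_k(\tau)\dd\tau \le (c_2+c_3)\,e_k(s) + (c_3-c_2)\,e_k(t) + (c_1c_2+c_4)\int_s^t z^{3/2}(\tau)\dd\tau.
\end{equation*}
Adding the pointwise dissipativity bound for $e_k(t)$ and then bounding the residual $e_k(t)$ on the right-hand side once more by $e_k(s) + c_1 \int_s^t z^{3/2}\dd\tau$ (which handles both signs of $c_3-c_2$ uniformly), the terms involving $e_k(t)$ on the right disappear and one arrives at
\begin{equation*}
 e_k(t) + \int_s^t e_k(\tau)\dd\tau \le C_1\,e_k(s) + C_2\int_s^t z^{3/2}(\tau)\dd\tau,
\end{equation*}
with explicit constants $C_1=1+2c_3$ and $C_2= c_1(1+c_3)+c_4$ depending only on the data of Propositions~\ref{prop:energy} and~\ref{prop:lower}, hence independent of $s$ and $t$.

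There is no real obstacle here; the argument is a routine linear combination of the two previously established inequalities, mirroring the reasoning of \cite[Corollary~3.5]{LaPoSch2018}. The only point that deserves a line of justification is that all four constants entering the combination are uniform in $s,t\in[0,T_{*})$, so that the resulting estimate holds on arbitrarily long subintervals of $J_{*}$; this uniformity is exactly what Propositions~\ref{prop:energy} and~\ref{prop:lower} provide.
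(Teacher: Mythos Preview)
Your argument is correct and is precisely the routine algebraic combination of Propositions~\ref{prop:energy} and~\ref{prop:lower} that the paper intends; the paper itself does not spell out the details but simply points to \cite[Corollary~3.5]{LaPoSch2018}, whose proof your computation reproduces. One cosmetic remark: the phrase ``handles both signs of $c_3-c_2$ uniformly'' is slightly imprecise, since substituting the upper bound $e_k(t)\le e_k(s)+c_1\int_s^t z^{3/2}$ into a term with negative coefficient would reverse the inequality; in the case $c_3<c_2$ one instead drops the nonpositive term $(c_3-c_2)e_k(t)$ outright, which yields $C_1=1+c_2+c_3$ and $C_2=c_1(1+c_2)+c_4$. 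Either way the conclusion stands with $t$- and $s$-independent constants.
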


In the linear or the Lipschitzian semilinear situation (cf.\ \cite{ElLaNi2002.2}),
one can restrict oneself to the case $k = 0$ and show that the latter term on the right-hand side of Equation \eqref{EQUATION:INCOMPLETE_STABILIZABILITY_ESTIMATE} vanishes.
In this situation, the squared norms $e_{0}(t)$ and $z_{0}(t)$ are equivalent. This renders the ``incomplete'' stabilizability inequality complete
and the exponential decay of $e_{0}(t) \sim z_{0}(t)$ follows similarly to the proof of Datko \& Pazy's theorem (cf.\ \cite[Remark 4.2]{LaPoSch2018}).
In contrast, in the quasilinear situation we are concerned with, such an argument is impossible at the basic energy level.
Hence, a genuinely quasilinear strategy needs to be developed.

%%%%%%%%%%%%%%%%%%%%%%%%%%%%%%%%%%%%%%%%%%%%%%%%%%%%%%%%%%%%%%%%%%%%%%%%%%
% Section 4:
%%%%%%%%%%%%%%%%%%%%%%%%%%%%%%%%%%%%%%%%%%%%%%%%%%%%%%%%%%%%%%%%%%%%%%%%%%

\section{Uniform Stabilizability Inequality and Proof of Theorem \ref{thm:main}}
\label{SECTION:STABILIZABILITY}

The natural higher-order energies $e_{k}$ in the ``incomplete'' stabilizability estimate \eqref{EQUATION:INCOMPLETE_STABILIZABILITY_ESTIMATE}
in Corollary~\ref{observe} only contain higher-order temporal derivatives of the solution pair $(E, H)$ and, therefore, do not match the topology of the solution space $G^{3}$.
For the estimate \eqref{EQUATION:INCOMPLETE_STABILIZABILITY_ESTIMATE} to be complete,
the ``missing'' higher-order time-space derivatives need to be recovered so that both sides of the inequality can be formulated in terms of $z(t)$.
Trivially, the squared norms $e(t)$ and $z(t)$ are not equivalent in general.
At the same time, along classical solutions to the quasilinear Maxwell system \eqref{EQUATION:MAXWELL}, 
we will be able to bound $z(t)$ by a multiple of $e(t)$ plus a quadratic term in $z(t)$.
This amounts to a careful higher-order regularity analysis of solutions.
Namely, in Section \ref{SECTION:REGULARITY_BOOST}, we prove the following regularity boost result.
\begin{prop}
    \label{PROPOSITION:REGULARITY_BOOST}
    Under conditions of Proposition \ref{prop:energy}, there exist constants $c_{5}$ and $c_{6}$ independent of $t$ such that
    \begin{equation}
        \label{EQUATION:ENERGY_BOOST_ESTIMATE}
        z(t) \leq c_{5} e(t) + c_{6} z^2(t) \qquad \text{ for } t \in [0, T_{\ast}).
    \end{equation}
\end{prop}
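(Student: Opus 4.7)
My plan is a descending induction on $k\in\{3,2,1,0\}$ that upgrades the temporal $L^{2}$-control provided by $e(t)$ into the full $\cH^{3-k}$ spatial regularity of $\partial_{t}^{k}(E,H)$. The base case $k=3$ is immediate from the uniform positive definiteness \eqref{est:coeff-lower} of $\wh\ep_{3},\wh\mu_{3}$, yielding $\|\partial_{t}^{3}(E,H)(t)\|_{L^{2}}^{2} \lesssim e(t)$. For $k\le 2$ the workhorse is an elliptic ``$\div$--$\curl$'' regularity estimate stated in Section~\ref{SECTION_AUXILIARY_RESULTS} (after \cite{CoDaNi2010}) which, applied to the pair, recovers the $\cH^{1}$-norm of $(E,H)$ from the $L^{2}$-norms of $\curl E$, $\curl H$, $\div(\wh\ep_{0} E)$ and $\div(\wh\mu_{0} H)$ together with the $\cH^{1/2}(\Gamma)$-norm of the Silver--M\"uller expression $H\times\nu + (\lambda(E\times\nu))\times\nu$, which vanishes identically in our setting.

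Applied to $\partial_{t}^{k}(E,H)$ this reduces the first-order spatial estimate to bounding three ingredients. The curls are delivered by the evolution equations \eqref{eq:maxwell1}: $\curl \partial_{t}^{k} H$ is expressed through $\partial_{t}(\wh\ep_{k}\partial_{t}^{k} E)+\partial_{t} f_{k}$, hence by $\partial_{t}^{k+1}E$, already controlled at the previous induction step, plus commutators whose $L^{2}$-norms are estimated in \eqref{est:z}; analogously for $\curl\partial_{t}^{k} E$. The divergences are given directly by \eqref{eq:div} and again controlled via \eqref{est:z}. The boundary trace vanishes by the Silver--M\"uller condition applied to $\partial_{t}^{k}(E,H)$. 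This produces the $\cH^{1}$-step of the induction with an error of order $z(t)^{3/2}$ or $z(t)^{2}$.

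For higher spatial orders a naive iteration fails because high-order normal derivatives of $(E,H)$ destroy the Silver--M\"uller boundary condition. The remedy, inspired by \cite{LaPoSch2018}, is a boundary-collar localisation via a partition of unity and local straightening charts that separate tangential from normal derivatives. Tangential differentiation preserves the boundary condition up to lower-order commutators with $\lambda$ and the chart data, so the elliptic estimate iterates directly on $\partial_{\tau}^{\alpha} \partial_{t}^{k}(E,H)$ for any tangential multi-index $|\alpha|\le 3-k-1$. Each normal derivative is instead extracted algebraically: in a local orthonormal frame $(\tau_{1},\tau_{2},\nu)$, two components of the $\curl$ equation \eqref{eq:maxwell1} together with the divergence equation \eqref{eq:div} form an invertible linear system for $\partial_{\nu}\partial_{t}^{k}(E,H)$ -- invertibility ensured by \eqref{est:coeff-lower} and \eqref{ass:coeff2} -- expressing it as a combination of tangential derivatives and $\partial_{t}^{k+1}(E,H)$. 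Iterating this swap up to $3-k$ times closes level $k$ of the induction.

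Collecting all contributions, and noting that every commutator in \eqref{def:fk}, \eqref{eq:tilde-fk} and the bounds of \eqref{est:z} is at least quadratic in the solution, the error terms are absorbed into $c_{6}z(t)^{2}$, yielding \eqref{EQUATION:ENERGY_BOOST_ESTIMATE}. The main obstacle is the normal-derivative extraction under genuinely anisotropic and $x$-dependent coefficients $\wh\ep_{k},\wh\mu_{k}$: the matrix to be inverted depends on $x$ and on the solution itself, so each swap generates lower-order terms with coefficients involving $\partial_{\xi}\ep,\partial_{\xi}\mu$ that must be carefully propagated through the induction and controlled uniformly via \eqref{est:delta} with $\delta\le \delta_{0}$.
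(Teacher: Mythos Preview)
Your proposal is correct and follows essentially the same strategy as the paper's proof: the elliptic $\div$--$\curl$ estimate (Lemma~\ref{prop:div-curl}) for the first spatial derivative, a boundary-collar cutoff separating tangential from normal directions, iteration of the elliptic lemma along tangential derivatives which preserve the Silver--M\"uller condition modulo lower order, and algebraic recovery of each normal derivative from the $\curl$ and $\div$ relations (the paper's equations \eqref{eq:curl1} and \eqref{eq:div-nu}), with all commutator terms controlled by $z^{2}$ via \eqref{est:z}. The only organizational difference is that the paper does not present this as a strict descending induction on $k$ but rather first establishes the $\cH^{1}$-level for all $k\in\{0,1,2\}$ simultaneously (Step~1), then builds up $\cH^{2}$ for $(E,H)$, $\cH^{2}$ for $\partial_{t}(E,H)$, and finally $\cH^{3}$ for $(E,H)$; this reordering is immaterial since the dependencies match your scheme. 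A minor point: the paper works with a single global orthonormal frame $\{\tau^{1},\tau^{2},\nu\}$ on the collar rather than local straightening charts, which slightly streamlines the bookkeeping but is equivalent to what you propose.
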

  A result reminiscent of Proposition \ref{PROPOSITION:REGULARITY_BOOST} 
for local-in-time solutions to Equation \eqref{EQUATION:MAXWELL} has recently been obtained in \cite{SchSp2}.
Here, our goal is to improve upon these results by showing the estimate holds uniform in time, i.e., the constants do not blow-up, provided the initial
data are sufficiently small. This will result in a uniform stabilizability inequality \eqref{EQUATION:STABILIZABILITY_EQUALITY} stated in Proposition \ref{prop:z} below.
Combining Corollary \ref{observe} and Proposition \ref{PROPOSITION:REGULARITY_BOOST}, we derive \eqref{EQUATION:STABILIZABILITY_EQUALITY}
in a fashion vaguely reminiscent of the proof of Proposition~4.1 of \cite{LaPoSch2018}.
\begin{prop}[Nonlinear stabilizability inequality]
    \label{prop:z}
    Suppose the conditions of Theorem \ref{thm:main} hold true and that $\delta\in(0,\delta_1]$ in \eqref{est:delta} with the number
    $\delta_1$ from Proposition~\ref{prop:lower}.
	For all initial data satisfying the smallness condition \eqref{est:data} with  $r \in \big(0, r(\delta)\big]$
    the associated classical solution $(E, H)$ fulfills the uniform stabilizability inequality
    \begin{equation}
        \label{EQUATION:STABILIZABILITY_EQUALITY}
        z(t) + \int_{s}^{t} z(\tau) \dd \tau \leq 
        C \Big(z(s) + z^2(t) + \int_{s}^{t} z^{2}(\tau) \dd \tau\Big) \qquad \text{ for } \qquad 0 \leq s < t < T_{*},
    \end{equation}
    where the complete (squared) norm $z$ is defined in \eqref{def:dez} and the constant $C$ is independent of $t$ and the radius $r \in (0, r(\delta)]$.
\end{prop}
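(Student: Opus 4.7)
The plan is to combine the observability-enhanced energy estimate of Corollary~\ref{observe} at the top level $k=3$ with the regularity boost of Proposition~\ref{PROPOSITION:REGULARITY_BOOST}, and then to exploit the smallness built into the a priori bound \eqref{est:delta} to absorb the only sub-quadratic error term that survives.

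First, I would apply Corollary~\ref{observe} with $k=3$ to get
\begin{equation*}
  e(t) + \int_s^t e(\tau) \dd \tau \leq C_1\, e(s) + C_2 \int_s^t z^{3/2}(\tau) \dd \tau.
\end{equation*}
Since $e$ consists of $L^2$-norms of the time derivatives $\partial_t^j(E,H)$ weighted by the uniformly bounded tensors $\wh\ep_3, \wh\mu_3$ (see \eqref{est:z}), one trivially has $e(s) \leq C_e\, z(s)$ for some constant $C_e$ independent of $s,t$, and $r$.

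Next, I would invoke Proposition~\ref{PROPOSITION:REGULARITY_BOOST} pointwise at time $t$, as well as at every intermediate $\tau \in [s,t]$, and integrate the latter inequality over $\tau$. Adding the resulting bounds and substituting the energy estimate above yields
\begin{equation*}
  z(t) + \int_s^t z(\tau)\dd\tau
  \leq c_5 C_1 C_e\, z(s) + c_5 C_2 \int_s^t z^{3/2}(\tau)\dd\tau + c_6\, z^2(t) + c_6 \int_s^t z^2(\tau)\dd\tau.
\end{equation*}
The $z^2$ terms already match those allowed by \eqref{EQUATION:STABILIZABILITY_EQUALITY}, so only the $z^{3/2}$ integral is problematic. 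But on $J_*$ one has $z(\tau) \leq \delta^2 \leq 1$ by \eqref{est:delta}, so $z^{3/2}(\tau) \leq \delta\, z(\tau)$. Choosing $\delta$ small enough that $c_5 C_2 \delta \leq 1/2$ -- which amounts to possibly further shrinking the threshold $\delta_1$ from Proposition~\ref{prop:lower} -- this integral is absorbed into half of $\int_s^t z(\tau)\dd\tau$ on the left-hand side. Multiplying through by $2$ then produces \eqref{EQUATION:STABILIZABILITY_EQUALITY} with $C = \max(2 c_5 C_1 C_e,\, 2 c_6)$, which depends neither on $s$, $t$, nor on $r \in (0, r(\delta)]$.

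I expect the combining step itself to be mostly bookkeeping of exponents and a smallness-based absorption; the genuine technical obstacle lies upstream in Proposition~\ref{PROPOSITION:REGULARITY_BOOST}, which is the only non-trivial mechanism for recovering the missing spatial derivatives of $(E,H)$ from their time derivatives and rests on the div-curl elliptic theory, the solenoidality properties \eqref{eq:div0}--\eqref{eq:div}, and the boundary-collar arguments for normal regularity announced in the introduction.
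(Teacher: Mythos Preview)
Your argument follows the same skeleton as the paper's: apply Corollary~\ref{observe} at the top level, bound $e(s)\le Cz(s)$, and use Proposition~\ref{PROPOSITION:REGULARITY_BOOST} both at the endpoint $t$ and under the time integral. The only divergence is in how you dispose of the $\int_s^t z^{3/2}\,\mathrm{d}\tau$ term. You invoke the a~priori bound $z\le\delta^2$ to estimate $z^{3/2}\le \delta\,z$ and then absorb by shrinking~$\delta_1$; the paper instead applies Young's inequality $z^{3/2}\le \tfrac12(z+z^2)$ (in effect a weighted version, so that the linear contribution carries an arbitrarily small prefactor and can be absorbed into the left-hand side without any constraint on~$\delta$).

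The practical difference is that the paper's route proves the proposition exactly as stated, namely for \emph{every} $\delta\in(0,\delta_1]$ with the fixed $\delta_1$ coming from Proposition~\ref{prop:lower}, whereas your absorption step requires an additional smallness condition on $\delta$ and thus delivers a slightly weaker conclusion. Since $\delta$ is anyway further restricted in the proof of Theorem~\ref{thm:main}, this is harmless for the overall argument, but if you want to match the proposition verbatim you should replace the smallness absorption by the Young-inequality splitting.
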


\begin{proof}
    Utilizing Equation \eqref{EQUATION:INCOMPLETE_STABILIZABILITY_ESTIMATE} from Corollary \ref{observe} (summed up over $k \in \{0, 1, 2, 3\})$
    and Equation \eqref{EQUATION:ENERGY_BOOST_ESTIMATE} from Proposition \ref{PROPOSITION:REGULARITY_BOOST}, we estimate
    \begin{align*}
        c_{5}^{-1} z(t) - c_{5}^{-1} c_{6} z^{2}(t) 
        + \int_{s}^{t} \Big(c_{5}^{-1} z(\tau) - c_{5}^{-1} c_{6} z^{2}(\tau)\Big) \dd \tau
        &\leq e(t) + \int_{s}^{t} e(\tau) \dd \tau \\
        &\leq C_{1} e(s) + 4 C_{2} \int_{s}^{t} z^{3/2}(\tau) \dd \tau \\
        &\leq C_{1} z(s) + 4 C_{2} \int_{s}^{t} z^{3/2}(\tau) \dd \tau
    \end{align*}
    and, therefore,
    \begin{align}
        \label{EQUATION:STABILIZABILITY_EQUALITY_PRECURSOR}
        \begin{split}
        c_{5}^{-1} z(t) + c_{5}^{-1} \int_{s}^{t} z(\tau) \dd \tau 
        \leq C_{1} z(s) + c_{5}^{-1} c_{6} z^{2}(t) + \int_{s}^{t} \Big(4 C_{2} z^{3/2}(\tau) + c_{5}^{-1} c_{6} z^{2}(\tau)\Big) \dd \tau.
        \end{split}
    \end{align}
    Using Young's inequality to estimate $z^{3/2}(\tau) \leq \tfrac{1}{2} \big(z(\tau) + z^{2}(\tau)\big)$, 
    the estimate in (\ref{EQUATION:STABILIZABILITY_EQUALITY_PRECURSOR}) yields the desired inequality
    with an appropriate positive constant $C$.
\end{proof}

Theorem \ref{thm:main} can now easily be proved, cf.\ \cite{LaPoSch2018} or \cite{LPW}.

\begin{proof}[Proof of Theorem \ref{thm:main}]
 Fixing $\delta=\min\{\delta_1, 1/2C\}$, estimate \eqref{EQUATION:STABILIZABILITY_EQUALITY}  furnishes the enhanced linear stabilizability inequality
    \begin{equation}
        \label{EQUATION:LINEAR_STABILIZABILITY_EQUALITY}
        z(t) + \int_{s}^{t} z(\tau) \dd \tau \leq 2 C z(s) \qquad \text{ for } 0 \le s \le t <T_*.
    \end{equation}
    If we have here $T_*=\infty$, the result follows as in the linear case, see \cite[Remark 4.2]{LaPoSch2018} or the proofs of \cite[Corollary 5.7]{LaPoWa2018} 
    or \cite[Theorem A.1]{AnPo2018}.
    
 Suppose that   $T_*<\infty$. Equation \eqref{eq:contr} then  yields $z(T_*)=\delta^2$.
 On the other hand,   \eqref{EQUATION:LINEAR_STABILIZABILITY_EQUALITY} shows that $z(t)\le 2Cz(0)$
 for $0\le t< T_*$ and initial data with $\|(E_0,H_0)\|_{\cH^3}^2 \le r^2$
for all $r\in (0, r(\delta)]$. (The number  $r(\delta)>0$ was introduced before \eqref{def:T*}.)
Formulas \eqref{eq:maxwell2} and \eqref{est:z} then imply
\[z(t) \le sCz(0)\le 2c_0C\, \|(E_0,H_0)\|_{\cH^3}^2\le 2  c_0 Cr^2\]
for a constant $c_0>0$. We now fix the radius
\begin{equation}
    \label{def:r} 
    r :=\min \Big\{r(\delta), \frac{\delta}{\sqrt{4c_0C}}\Big\}.
\end{equation}
As a result, the quantity $z(t)$ is bounded by $\delta^2/2$ for $t<T_*$ and by continuity also for $t=T_*$.
This fact contradicts  $z(T_*)=\delta^2$, and hence $T_*=\infty.$
\end{proof}

%%%%%%%%%%%%%%%%%%%%%%%%%%%%%%%%%%%%%%%%%%%%%%%%%%%%%%%%%%%%%%%%%%%%%%%%%%
% Section 5: Auxiliary Results
%%%%%%%%%%%%%%%%%%%%%%%%%%%%%%%%%%%%%%%%%%%%%%%%%%%%%%%%%%%%%%%%%%%%%%%%%%
\section{Auxiliary Results}
\label{SECTION_AUXILIARY_RESULTS}

\subsection{$\pmb\curl$-$\pmb\div$-estimates}
The following lemma is a variant  of \cite[Lemma 4.5.5]{CoDaNi2010} for tensors $\lambda$.
In a certain sense, it ensures  optimal regularity for the ``elliptic part''  of 
Equation \eqref{EQUATION:LINEAR_INHOMOGENEOUS_MAXWELL_SYSTEM1}. 
\begin{lem}
    \label{prop:div-curl}
    Let $\Omega$ be simply connected. Assume that  $u, v \in \cH(\curl)$ and $\alpha, \beta \in W^{1, \infty}(\Omega, \mathbb{R}^{3 \times 3}_{\mathrm{sym}})$
    satisfy $\alpha, \beta \geq \eta > 0$, $\div(\alpha u), \div(\beta v) \in L^{2}(\Omega)$ and 
    $v \times \nu + \lambda (u \times \nu) \times \nu =: h \in \cH^{1/2}(\Gamma, \mathbb{R}^{3})$.
    Then $u$ and $ v$ belong to  $\cH^1(\Omega, \mathbb{R}^{3})$ and
    \begin{align*}
        \|u\|_{\cH^{1}} + \|v\|_{\cH^{1}} &\leq c\big(\|\curl u\|_{L^{2}} + \|\curl v\|_{L^{2}}
        + \|\div(\alpha u)\|_{L^{2}} + \|\div(\beta v)\|_{L^{2}} + \|h\|_{\cH^{1/2}(\Gamma)}\big).
    \end{align*}
\end{lem}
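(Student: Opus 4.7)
My plan is to adapt the proof of Lemma~4.5.5 in \cite{CoDaNi2010}, which treats scalar impedances, to the present matrix-valued situation. The only non-trivial feature in our case is that the tensor $\lambda$ couples $u$ and $v$ through the boundary condition; the fact that makes the adaptation transparent is that $\lambda$ is symmetric, tangential, and uniformly positive definite on the tangent space, so the bilinear form $(\xi,\eta)\mapsto \int_\Gamma \lambda \xi\cdot \eta \dd x$ acting on tangential vector fields retains the coercivity used in the scalar case.

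First I would reduce to $h=0$. Since $h\in \cH^{1/2}(\Gamma,\RR^3)$ is tangential, a standard lifting theorem produces $w\in \cH^1(\Omega,\RR^3)$ with $w\times\nu=h$ on $\Gamma$ and $\|w\|_{\cH^1}\lesssim \|h\|_{\cH^{1/2}(\Gamma)}$; replacing $v$ by $v-w$ puts $h=0$ and adds controlled $L^2$-terms to $\curl v$ and $\div(\beta v)$. Next I would decompose by a smooth partition of unity $\{\chi_k\}$ subordinate to a finite cover of $\ol{\Omega}$ consisting of one interior set and finitely many boundary charts. On interior patches, the classical Friedrichs/Gaffney identity for compactly supported smooth fields together with a density argument yields $\|\nabla(\chi_k u)\|_{L^2}^2\lesssim \|\curl(\chi_k u)\|_{L^2}^2+\|\div(\chi_k u)\|_{L^2}^2$, and the commutators with $\chi_k$ are bounded by $\|u\|_{L^2}$; a standard compactness and simple-connectedness argument then controls $\|u\|_{L^2}$ by $\|\curl u\|_{L^2}+\|\div(\alpha u)\|_{L^2}$ plus a term that can be absorbed, and similarly for $v$.

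On each boundary patch I would use a $C^{4}$-diffeomorphism to flatten $\Gamma$ to $\{x_3=0\}$. The pulled-back fields $\tilde u,\tilde v$ satisfy a system of the same structure with transformed coefficients $\tilde\alpha,\tilde\beta,\tilde\lambda$, where $\tilde\lambda$ remains symmetric, tangential, and $\geq \eta I$ on the flattened tangent space. The core step is to obtain $L^2$-bounds on the tangential derivatives $\partial_j\tilde u,\partial_j\tilde v$ for $j=1,2$. I would use a tangential difference quotient $D_h^j$: the increments $D_h^j\tilde u, D_h^j\tilde v$ satisfy an analogous system with homogeneous boundary condition modulo commutator terms of the form $(D_h^j\tilde\lambda)(\tilde u\times e_3)$ on $\Gamma$ and $(D_h^j\tilde\alpha)\tilde u$ in $\Omega$; these are bounded by $\|\tilde u\|_{L^2}\cdot(\|\tilde\alpha\|_{W^{1,\infty}}+\|\tilde\lambda\|_{C^1})$. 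Testing the differentiated system against $D_h^j\tilde u$ and $D_h^j\tilde v$ and integrating by parts produces, via the boundary condition, the coercive boundary integral $\int_{\{x_3=0\}}\tilde\lambda(D_h^j\tilde u\times e_3)\cdot(D_h^j\tilde u\times e_3)\geq\eta\int|D_h^j(\tilde u\times e_3)|^2$; the remaining terms are controlled by Cauchy-Schwarz and Young's inequality. Passing $h\to 0$ gives the tangential derivatives in $L^2$. The normal derivatives are then recovered algebraically: the normal component of $\partial_3\tilde u$ from $\div(\tilde\alpha\tilde u)$ and the known tangential derivatives, the tangential components from $\curl\tilde u$ and the known tangential derivatives; likewise for $\tilde v$. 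Gluing via the partition of unity yields the global $\cH^1$-estimate.

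The main obstacle is the tangential-difference-quotient step, because the matrix $\tilde\lambda$ intertwines $\tilde u$ and $\tilde v$ in the boundary integral, so the associated bilinear form must be shown to be coercive modulo lower-order error terms. This is precisely where the tangentiality and positive definiteness of $\lambda$ enter crucially; without them the matrix-valued boundary condition would fail to be an elliptic stable boundary condition in the ADN sense, and no $\cH^1$ regularity could be expected.
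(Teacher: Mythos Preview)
Your difference-quotient strategy has a genuine gap at the boundary step. You write ``testing the differentiated system against $D_h^j\tilde u$ and $D_h^j\tilde v$'', but there is no PDE system here to test: the hypotheses hand you $\curl u,\curl v,\div(\alpha u),\div(\beta v)$ as \emph{data}, with no interior equation coupling $u$ to $v$. The only coupling is through the boundary condition. Consequently no integration by parts produces the coercive boundary term $\int\tilde\lambda(D_h^j\tilde u\times e_3)\cdot(D_h^j\tilde u\times e_3)$ you claim. If instead you try the Gaffney identity for $D_h^j\tilde u$ alone, the boundary contribution mixes the tangential trace $u\times\nu$ with the \emph{normal} trace $u\cdot\nu$ and its tangential derivatives; the Silver--M\"uller relation constrains neither $u\cdot\nu$ nor $v\cdot\nu$, so those terms cannot be absorbed. (Your commutator bound ``$(D_h^j\tilde\lambda)(\tilde u\times e_3)$ controlled by $\|\tilde u\|_{L^2}$'' is also too optimistic: $u\in\cH(\curl)$ only gives $u\times\nu\in\cH^{-1/2}(\Gamma)$.) Your closing remark that the boundary condition should be elliptic ``in the ADN sense'' is really the whole problem, and the energy argument you sketch does not verify it.

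The paper avoids this by a Helmholtz-type reduction to a \emph{scalar} elliptic problem. One writes $u=w+\nabla\varphi$ with $w\in\cH^1$, $\curl w=\curl u$, $\div w=0$, $w\cdot\nu=0$ and $\|w\|_{\cH^1}\lesssim\|\curl u\|_{L^2}$ (this uses the simple connectedness). Then $\div(\alpha\nabla\varphi)=\div(\alpha u)-\div(\alpha w)\in L^2$, and the Silver--M\"uller condition becomes $\tilde\lambda\,\nabla_\Gamma\varphi=\psi$ on $\Gamma$ with $\tilde\lambda=B^\top\lambda B$ (where $B\xi=\xi\times\nu$) symmetric positive definite on the tangent space and $\psi\in\cH^{-1/2}(\div_\Gamma)$ because $v\in\cH(\curl)$. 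Taking the surface divergence yields $\div_\Gamma(\tilde\lambda\nabla_\Gamma\varphi)\in\cH^{-1/2}(\Gamma)$; standard elliptic regularity on $\Gamma$ gives $\tr\varphi\in\cH^{3/2}(\Gamma)$, and then interior regularity gives $\varphi\in\cH^2(\Omega)$, hence $u\in\cH^1$. The point you are missing is that the tangential positivity of $\lambda$ is exploited as coercivity of the scalar surface operator $\div_\Gamma(\tilde\lambda\nabla_\Gamma\,\cdot\,)$, not through a vector-field energy identity.
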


\begin{proof}
    Proposition~IX.1.3 and Remark~IX.1.4 of \cite{DL} yield a function $w \in \cH^{1}(\Omega, \mathbb{R}^{3})$ with  $\curl u = \curl w$, 
    $\div w = 0$, and $\nu \cdot w = 0$ on $\Gamma$.     Moreover, $\|w\|_{\cH^{1}} \leq c \|u\|_{L^{2}}$.
    Further, $u- w = \nabla \varphi$ for some $\varphi \in \cH^{1}(\Omega)$ by \cite[Proposition IX.1.2]{DL}.
    Hence,
    \begin{equation*}
        \div(\alpha \nabla \varphi) = \div(\alpha u) - \div(\alpha w) \in L^{2}(\Omega).
    \end{equation*}
    As in Definition~2.2 of \cite{Ce} we let  $\nabla_{\Gamma} \varphi = \nu\times (\nabla \ph\times \nu)$
    be the tangential trace of $\nabla\varphi$, and we define $\div_\Gamma$ by duality.  We can write  $u\times \nu =Bu$ with
    \[ B:=\begin{pmatrix} 0 &\nu_3 &-\nu_2\\ -\nu_3 & 0 & \nu_1\\ \nu_2 & -\nu_1& 0 \end{pmatrix}.\]
 The matrix $\tilde{\lambda}:=B^\top \lambda B$ is again symmetric, positive definite, and Lipschitzian in $x$. Observe that
 \[ \tilde{\lambda} (\nu\times (\nabla \ph\times \nu)) = B^\top \lambda (\nabla \ph\times \nu)=  -( \lambda (\nabla \ph\times \nu))\times \nu.\]
 Invoking \cite[Theorem 2.4]{Ce} and the boundary condition, we thus deduce
    \begin{equation*}
        \tilde\lambda \nabla_{\Gamma} \varphi = (\lambda (w \times \nu)) \times \nu - (\lambda (u \times \nu)) \times \nu
        = (\lambda (w \times \nu)) \times \nu + v \times \nu - h=:\psi\in  \cH^{-1/2}(\div_\Gamma),
    \end{equation*}
where we also use the fact $v\in \cH(\curl)$ and Theorem~2.4 in \cite{Ce}.
Hence, $\div_\Gamma (\tilde\lambda \nabla_{\Gamma} \varphi)\in H^{-1/2}(\Gamma)$
and the standard elliptic theory implies that $\tr \varphi$  belongs to $H^{3/2}(\Gamma)$. Again, by elliptic results, it follows that
$\varphi$ is contained in  $\cH^{2}(\Omega)$ and satisfies
    \begin{align*}
\|\varphi\|_{\cH^{2}(\Omega)} &\leq c\big(\|\div(\alpha u)\|_{L^{2}(\Omega)} + \|\div(\alpha w)\|_{L^{2}(\Omega)} + \|\div_\Gamma \psi\|_{\cH^{-1/2}(\Gamma)}\big) \\
        &\leq c\big(\|\div(\alpha u)\|_{L^{2}(\Omega)} + \|w\|_{\cH^{1}(\Omega)} + \|h\|_{\cH^{1/2}(\Omega)} + \|\curl v\|_{L^{2}(\Omega)}\big) \\
        &\leq c\big(\|\div(\alpha u)\|_{L^{2}(\Omega)} + \|\curl u\|_{L^{2}(\Omega)} + \|h\|_{\cH^{1/2}(\Omega)}+ \|\curl v\|_{L^{2}(\Omega)}\big).
    \end{align*}
We have thus shown the desired estimate for $u$. The inequality for $v$ is proved similarly, but here one can directly use that 
$\nu\times (v \times \nu)$ is bounded in $\cH^{1/2}(\Gamma)$ by $c\|\curl v\|_{L^{2}(\Omega)}$.
\end{proof}

\subsection{Tangential and normal derivatives}
\label{SUBSECTION:MICROLOCAL}

For a sufficiently small $a > 0$, consider the ``boundary colar'' $\Gamma_{a} := \big\{x \in \ol{\Omega} \,|\, \operatorname{dist}(x, \Gamma) < a\big\}$ of $\Gamma$
 with $\operatorname{dist}(x, \Gamma)  := \max_{y \in \Gamma} |x - y|$. Fix tangential vectors
 $\big\{\tau^{1}(x), \tau^{2}(x)\big\}$ spanning the tangential plane at each $x \in \Gamma$.
Since $\Gamma \in C^{5}$, the vector bundle $\big\{\tau^{1}(x), \tau^{2}(x), \nu(x)\big\}$ can be extended in $C^4$ 
from $\Gamma$ to $\Gamma_{a}$ such that $\big\{\tau^{1}(x), \tau^{2}(x), \nu(x)\big\}$ form an orthonormal basis of $\mathbb{R}^{3}$ for each $x\in \Gamma_a$.

Similar to \cite[Section 5.2]{LaPoSch2018}, we introduce the following differential operators.
For $\xi, \zeta \in \{\tau^{1}, \tau^{2}, \nu\}$, $u \in \mathbb{R}^{3}$ and $a \in \mathbb{R}^{3 \times 3}$, let
\begin{equation*}
    \partial_{\xi} = \sum_{j = 1}^{3} \xi_{j} \partial_{j}, \qquad
    u_{\xi} = u \cdot \xi, \qquad u^{\xi} = u_{\xi} \xi, \qquad
    u^{\tau} = \sum_{i = 1}^{2} u_{\tau^{i}} \tau^{i}, \qquad 
    a_{\xi \zeta} = \xi^{\top} a \zeta \qquad \text{ in } \Gamma_{a}.
\end{equation*}
With this notation, for a smooth function $\varphi$, we can write
\begin{equation*}
    \nabla \varphi = \sum_{\xi \in \{\tau^{1}, \tau^{2}, \nu\}} \xi \big(\xi \cdot \nabla \varphi\big)
    = \sum_{\xi \in \{\tau^{1}, \tau^{2}, \nu\}} \xi \partial_{\xi} \varphi \qquad \text{ and } \qquad
    \partial_{j} \varphi = \sum_{\xi \in \{\tau^{1}, \tau^{2}, \nu\}} \xi_{j} \partial_{\xi} \varphi \qquad \text{ in } \Gamma_{a}.
\end{equation*}
Therefore, observing the identity
\begin{equation*}
    \curl u = \partial_{1} \big(0, -u_{3}, u_{2}\big)^{\top} + \partial_{2} \big(u_{3}, 0, -u_{1}\big)^{\top} + \partial_{3} \big(-u_{2}, u_{1}, 0\big)^{\top},
\end{equation*}
the $\curl$-operator can be expressed as
\begin{equation*}
    \curl u = \sum_{j = 1}^{3} J_{j} \partial_{j} = \sum_{j = 1}^{3} \sum_{\xi \in \{\tau^{1}, \tau^{2}, \nu\}} J_{j} \xi_{j} \partial_{\xi} 
    = \sum_{\xi \in \{\tau^{1}, \tau^{2}, \nu\}} J(\xi) \partial_{\xi},
\end{equation*}
where
\begin{equation*}
    J_{1} =
    \begin{pmatrix}
        0 & 0 &  0 \\
        0 & 0 & -1 \\
        0 & 1 & 0
    \end{pmatrix}, \quad
    J_{2} =
    \begin{pmatrix}
         0 & 0 & 1 \\
         0 & 0 & 0 \\
        -1 & 0 & 0
    \end{pmatrix}, \quad
    J_{3} =
    \begin{pmatrix}
        0 & -1 & 0 \\
        1 &  0 & 0 \\
        0 &  0 & 0
    \end{pmatrix} \quad \text{ and } \quad
    J(\xi) = \sum_{j = 1}^{3} \xi_{j} J_{j}.
\end{equation*}
(The matrix $-J(\nu)$ was called $B$ in the proof  of Lemma~\ref{prop:div-curl}.) Note that 
\begin{equation*}
    \operatorname{ker}\big(J(\nu)\big) = \operatorname{span}\{\nu\} \qquad \text{ and } \qquad
    \big(\operatorname{ker}\big(J(\nu)\big)\big)^{\top} = \operatorname{span}\{\tau^{1}, \tau^{2}\big\} \qquad \text{ in } \Gamma_{a}.
\end{equation*}
In particular, factoring out the null space, we can write $J(\nu) u = J(\nu) u^{\tau}$.
Moreover, the restriction of $J(\nu)$ onto the span of $\{\tau^{1}, \tau^{2}\}$ is invertible with an inverse $R(\nu)$.

\subsection{Reconstructing the normal derivatives}

Typically, when showing the ``full'' Sobolev spatial regularity  for elliptic boundary-value problems,
the regularity of the boundary symbol proves very beneficial. Unfortunately, this property is violated by the Maxwell system.
Indeed, the associated boundary-value problem is characteristic since the boundary symbol $J(\nu)$ has a nontrivial kernel $\operatorname{span}\{\nu\}$ and, thus, is singular.
Hence, additional effort is needed to reconstruct the regularity in the normal component.
Following \cite[Section 5.2]{LaPoSch2018}, we employ the so-called ``$\curl$-$\div$-strategy'' which roots in the observation
that the $\curl$-operator stores the information about the normal derivative of the tangential components,
whereas the solenoidality condition furnishes an estimate for the normal derivatives of the normal component.
We briefly protocol this procedure below. By Subsection~\ref{SUBSECTION:MICROLOCAL}, we can write
\begin{equation*}
    \curl u = J(\nu) (\partial_{\nu} u)^{\tau}  + J(\tau^{1}) \partial_{\tau^{1}} u  + J(\tau^{2}) \partial_{\tau^{2}} u.
\end{equation*}
Using the invertibility of $J(\nu)|_{\{\tau^{1}, \tau^{2}\}}$, we get
\begin{equation}
    \label{eq:curl1}
    \begin{split}
        \partial_\nu u^{\tau} &= \sum_{i = 1}^{2} (\partial_\nu \tau^{i} \, u_{\tau^{i}} + \tau^{i} \partial_{\nu} \tau^{i} \cdot u)
        + R(\nu) \Big(\curl u - \sum_{i = 1}^{2} J(\tau^{i}) \partial_{\tau^{i}} u \Big) \\
        &= R(\nu) \Big(\curl u - \sum_{i = 1}^{2} J(\tau^{i}) \partial_{\tau^{i}} u \Big) + \mathrm{l.o.t.}(u),
    \end{split}
\end{equation}
where the lower-order terms are a zeroth-order operator determined by $\Omega$.

Proceeding to the divergence, for a smooth matrix-valued $\alpha$, we obtain
\begin{align*}
    \div(\alpha u) &= \sum_{\xi, \zeta} \partial_{\xi} \big(\alpha_{\xi \zeta} u_{\zeta}\big) + \div(\xi) \xi^{\top} \alpha u \\
    &= \sum_{\xi, \zeta} (a_{\xi \zeta} \partial_{\xi} u_{\zeta} + \partial_{\xi} \alpha_{\xi \zeta} u_\zeta) + \sum_{\xi} \div(\xi) \xi^{\top} \alpha u
\end{align*}
with $\alpha_{\xi \zeta} = \xi^{\top} \alpha \zeta$. Solving this equation for $\partial_{\nu} u_{\nu}$ yields
\begin{equation}
    \label{eq:div-nu}
    \alpha_{\nu \nu} \partial_{\nu} u_{\nu}  = \div(\alpha u) - \sum_{(\xi,\zeta) \neq(\nu, \nu)} \alpha_{\xi \zeta} \partial_{\xi} u_{\zeta}
     - \sum_{\xi, \zeta} \partial_{\xi} a_{\xi \zeta} u_{\zeta} - \sum_{\xi} \div(\xi) \xi^{\top} \alpha u.
\end{equation}
If $\alpha=\alpha^\top \ge \eta I$, we have $\alpha_{\nu \nu}\ge \eta$ and thus the normal derivative $\partial_{\nu} u_{\nu}$ of the normal component 
 of $u$ is representeted through $\div(\alpha u)$, the normal derivatives of the tangential components of $u$, the tangential derivatives of $u$, 
 and  lower-order terms depending on $\Omega$ and $\alpha$ along with its derivatives.

In contrast to \cite{LaPoSch2018}, where the solenoidality condition was violated for the electic field 
due to the presence of an internally distributed electrical resistance term,
both fields in \eqref{EQUATION:MAXWELL} satisfy appropriate solenoidality conditions
and can be handled via Equation \eqref{eq:div-nu}.

%%%%%%%%%%%%%%%%%%%%%%%%%%%%%%%%%%%%%%%%%%%%%%%%%%%%%%%%%%%%%%%%%%%%%%%%%%
% Section 6: Regularity Boost
%%%%%%%%%%%%%%%%%%%%%%%%%%%%%%%%%%%%%%%%%%%%%%%%%%%%%%%%%%%%%%%%%%%%%%%%%%

\section{Regularity Boost: Proof of Proposition \ref{PROPOSITION:REGULARITY_BOOST}}
\label{SECTION:REGULARITY_BOOST}

\begin{proof}
We first outline the argument.  In Step 1), the desired estimates of  $\partial_{t}^{k} E$ and $\partial_{t}^{k} H$ easily follow from 
our ``elliptic'' $\curl$-$\div$-estimate in Lemma~\ref{prop:div-curl} and the system \eqref{eq:maxwell1}. 
Similarly, one can also bound the higher-order terms away from the boundary in the second step.
Near the boundary, one has to distinguish between the tangential and the normal regularity. The former can again be treated by means of  
Lemma~\ref{prop:div-curl} since tangential derivatives preserve the boundary condition up to lower-order terms. The normal regularity 
has to be recovered from the Maxwell equations and the divergence conditions as explained in the previous section. Iteratively, this is repeated in Step~4) 
for $(E,H)$ in $\cH^2$, in Step~5) for $\partial_t(E,H)$ in $\cH^2$ and in step~6) for $(E,H)$  in $\cH^3$.

 \smallskip

\textit{1) $\cH^{1}$-estimates for $\partial_{t}^{k} E$ and $\partial_{t}^{k} H$.}
   Let $k\in\{0,1,2\}$.
    We first control the $L^2$-norms of the curl and divergence of these fields via the time-differentiated Maxwell system \eqref{eq:maxwell1}
    and the solenoidality conditions  in \eqref{eq:div0} and \eqref{eq:div}. Also invoking 
the \emph{a priori} estimates \eqref{est:z}, we arrive at
    \begin{align*}
        \big\|\curl \partial_{t}^{k} E(t)\big\|_{L^2(\Omega)} &\leq c e_{k+1}^{1/2}(t) + cz(t)\delta_{k = 2}, &
        \big\|\curl \partial_{t}^{k} H(t)\big\|_{L^2(\Omega)} &\leq c e_{k+1}^{1/2}(t) + cz(t)\delta_{k = 2}, \\
        \big\|\!\div(\wh\ep_{k} (t)\partial_{t}^{k} E(t))\big\|_{L^{2}(\Omega)} &\leq cz(t)\delta_{k = 2}, & 
        \big\|\!\div(\wh\mu_{k}(t) \partial_{t}^{k} H(t))\big\|_{L^{2}(\Omega)} &\leq cz(t)\delta_{k = 2}
      %  \big\|\tr_{n}(\wh\ep_{k}\partial_{t}^{k} E(t))\big\|_{\cH^{1/2}(\Omega)} &\leq cz(t)\delta_{k = 2}, \\
        %
        %\big\|\div(\wh\mu_{k} \partial_{t}^{k} H(t))\big\|_{L^{2}(\Omega)} &\leq cz(t)\delta_{k = 2}, &
    %    \big\|\tr_{n}(\wh\mu_{k}\partial_{t}^{k} H(t))\big\|_{\cH^{1/2}(\Omega)} &\leq cz(t)\delta_{k = 2}
    \end{align*}
    with the Kronecker delta $\delta_{k = 2} = 1$ for $k = 2$ and $\delta_{k = 2}=0$ for $k \in \{0, 1\}$.
    Lemma~\ref{prop:div-curl} thus yields
    \begin{equation}
        \label{est:EH-H1}
        \begin{split}
            \big\|\partial_{t}^{k} E(t)\big\|_{\cH^{1}(\Omega)}^{2} \leq c e_{k+1}(t) + c z^{2}(t)\delta_{k = 2}, \qquad
            \big\|\partial_{t}^{k} H(t)\big\|_{\cH^{1}(\Omega)}^{2} \leq c e_{k+1}(t) + c z^{2}(t)\delta_{k = 2}.
            %\int_{0}^{t}  \big\|\partial_t^{k} H(\tau)\big\|_{\cH^{1}(\Omega)}^{2} \dd\tau 
            %&\le c \int_{0}^{t} \big(e_{k+1}(\tau) + z^{2}(\tau)\delta_{k2}\big) \dd\tau.
        \end{split}
    \end{equation}
     (Later on we often omit the argument $t$.)
    Note that the right-hand sides $\partial_{t}f_{k}$ and $\partial_{t}g_{k}$ in Equation \eqref{eq:maxwell1} are (locally) quadratic with respect to $(E, H)$
    and can thus be dominated by $z$ according to Equation \eqref{est:z}.
    This results in a superlinear remainder term of order $2$ in $z$ and will prove to be crucial for our reasoning.
    
    \smallskip
    
    \textit{2) Interior spatio-temporal estimates for $E$ and $H$.} Let $k \in \{0, 1\}$.
    For a fixed sufficiently small  $a > 0$, we have the boundary ``collar'' $\Gamma_{a} = \big\{x \in \ol{\Omega} \,|\, \operatorname{dist}(x, \Gamma) < a\big\}$
    from Subsection~\ref{SUBSECTION:MICROLOCAL}.
    Consider a cut-off function $\chi \in C_{c}^{\infty}\big(\ol{\Omega}, [0, 1]\big)$ with $\operatorname{supp}(\chi) \subset \Gamma_{a}$
    and $\chi =1$ on  $\Gamma_{a/2}$.
    Then  $1 - \chi$ is smooth with $\operatorname{supp}(1 - \chi) \subset \Omega \backslash \Gamma_{a/2}$ and, hence, vanishes near the boundary $\Gamma$.
    Moreover, the maps $(1 - \chi) E$ and $(1 - \chi) H$ are as regular as $E$ and $H$ and satisfy the Maxwell system \eqref{eq:maxwell1} 
    with the new right-hand sides
    \begin{align}
        \label{EQUATION:NEW_RHS_INTERIOR}
        \wh{f}_{k} =  \nabla \chi \times \partial_{t}^{k} H, \qquad
        \wh{g}_{k} =  - \nabla \chi \times \partial_{t}^{k} E
    \end{align}
    in place of $-\partial_{t} f_{k}=0$ respectively  $-\partial_{t} g_{k}=0$.

    Take  a multi-index $\alpha \in \NN_{0}^{3}$ with $1\le |\alpha| \leq 2 - k$. Using Equation \eqref{eq:maxwell1}, we calculate
    \begin{align*}
        \curl\big(\partial_{t}^{k} \partial_{x}^{\alpha} \big((1 - \chi) H\big)\big) =
        \partial_{t} \partial_{x}^{\alpha} \big(\wh{\ep}_{k} (1 - \chi) \partial_{t}^{k} E\big) - \partial_{x}^{\alpha} \big(\nabla \chi \times \partial_{t}^{k}H\big).
   %     + \partial_{t} \partial_{x}^{\alpha} (1 - \chi) f_{k}.
    \end{align*}
    Employing also the estimates of $\wh\ep_k$ in \eqref{est:z}, we derive
    \begin{align*}
        \big\|\curl\big(\partial_{t}^{k} \partial_{x}^{\alpha} \big((1 - \chi) H\big)\|_{L^{2}(\Omega)} \leq
        c \max_{0 \leq j \leq 3-|\alpha|} \|\partial_{t}^{j} E\|_{\cH^{|\alpha|}(\Omega)} + c\|\partial_{t}^{k} H\|_{H^{|\alpha|}(\Omega)}.
    \end{align*}
(One also invokes the Sobolev embeddings $\cH^1(\Omega) \hookrightarrow L^6(\Omega)$ and $\cH^2(\Omega) \hookrightarrow L^\infty(\Omega)$ to
bound $\partial_{t}^{j} E$.)  Note that $\partial_{t}^{k} \partial_{x}^{\alpha} \big((1 - \chi) E)$ and  $\partial_{t}^{k} \partial_{x}^{\alpha} \big((1 - \chi) H)$
 trivially  satisfy the boundary condition in  \eqref{eq:maxwell1}. We  recall the divergence properties
    \begin{align*}
        \div\big(\wh\mu_{0} H\big) = 0 \qquad \text{and} \qquad \div\big(\wh\mu_{1} \partial_t H\big) = 0,
    \end{align*}
    from Equations \eqref{eq:div0} and \eqref{eq:div}. Applying the product rule, it follows
    \begin{align}
       \label{EQUATION:INTERIOR_DIVERGENCE_ESTIMATE}
        \div\big(\wh{\mu}_{k} \partial_{t}^{k} \partial_{x}^{\alpha} \big((1 - \chi) H\big)\big) 
        &= \sum_{0 \leq \beta \le \alpha} \binom{\alpha}{ \beta} 
        \partial_{x}^{\alpha - \beta} \big((\nabla \chi)^{\top} \wh{\mu}_{k}\big) \partial_{t}^{k} \partial_{x}^{\beta} H \\
        \notag 
        &\qquad- \sum_{0 \leq \beta < \alpha} \binom{\alpha}{ \beta} 
          \div\big(\partial_{x}^{\alpha - \beta} \wh{\mu}_{k} \partial_{t}^{k} \partial_{x}^{\beta} \big((1 - \chi) H\big)\big) .
        %\notag  &\qquad - (1 - \chi) \div\big(\partial_{t} \wh{\mu}_{k} \partial_{t} H\big) \delta_{k = 2}.
    \end{align}
    The inequalities in \eqref{est:z} yield
    \begin{align}
        \begin{split}
            \big\|\div\big(\wh{\mu}_{k} \partial_{t}^{k} \partial_{x}^{\alpha} \big((1 - \chi) H\big)\big)\big\|_{L^{2}(\Omega)} &\leq
            c \|\partial_{t}^{k} H\|_{\cH^{|\alpha|}(\Omega)}, \\ %+ c z(t) \delta_{k = 2}, \\
            \big\|\div\big(\wh{\ep}_{k} \partial_{t}^{k} \partial_{x}^{\alpha} \big((1 - \chi) E\big)\big)\big\|_{L^{2}(\Omega)} &\leq
            c \|\partial_{t}^{k} E\|_{\cH^{|\alpha|}(\Omega)}, % + c z(t) \delta_{k = 2},
        \end{split}
    \end{align}
    where the latter estimate is shown analogously.
    
    First, let $|\alpha|=1$.  Lemma~\ref{prop:div-curl} and inequality  \eqref{est:EH-H1} then  imply the estimates
    \begin{equation}
        \label{EQUATION:INTERIOR_SPATIO_TEMPORAL_ESTIMATE}
        \big\|\partial_{t}^{k} \big((1 - \chi) E(t)\big)\|_{\cH^{2}(\Omega)}^2, \ \big\|\partial_{t}^{k} \big((1 - \chi) H(t)\big)\|_{\cH^{2}(\Omega)}^2
        \leq c e(t) + cz^2(t).
    \end{equation}
   For $|\alpha|=2$, it similarly follows
 \begin{equation}
        \label{EQUATION:INTERIOR_SPATIO_TEMPORAL_ESTIMATE3}
        \| (1 - \chi) (E(t),H(t))\|_{\cH^{3}(\Omega)}^2
        \leq c \max_{j\in\{0,1\}}\big\{\|\partial_t^j E(t)\big)\|_{\cH^{2}(\Omega)}^2, \|\partial_t^j H(t)\big)\|_{\cH^{2}(\Omega)}^2\big\}.
    \end{equation}
    
    \smallskip
    
    \textit{3) Preparations for boundary collar estimates for $E$ and $H$.} Let $k\in\{0,1\}$ and $\tau\in\{\tau^1,\tau^2\}$.
   We next treat  $\chi E$ and $\chi H$. Similar as $((1 - \chi) E,(1 - \chi) H)$, the fields $\chi E$ and $\chi H$ solve 
   the Maxwell equations \eqref{eq:maxwell1} with 
    \begin{align*}
        \check{f}_{k} = - \nabla \chi \times \partial_{t}^{k} H, \qquad
        \check{g}_{k} =  \nabla \chi \times \partial_{t}^{k} E
    \end{align*}
    in place of $-\partial_{t} f_{k}=0$ respectively $-\partial_{t} g_{k}=0$. 
    An analogue of divergence equation \eqref{EQUATION:INTERIOR_DIVERGENCE_ESTIMATE} remains true.
    While the boundary condition is preserved (since $\chi = 1$ near $\Gamma$),
    the latter is only invariant under the tangential derivatives $\partial_{\tau} = \tau \cdot \nabla$ and not for the normal one.
    Taking the tangential derivative of  the boundary condition in Equation \eqref{eq:maxwell1}, we can write
    \begin{align}
        \label{EQUATION:BC_TANGENTIALLY_DIFFERENTIATED}
            \partial_{\tau} \partial_{t}^{k} H \times \nu + \big(\lambda \big(\partial_{\tau} \partial_{t}^{k} E \times \nu)\big) \times \nu
            &= -\partial_{t}^{k} H \times \partial_{\tau} \nu - \big(\partial_{\tau} \lambda(\partial_{t}^{k} E \times \nu)\big) \times \nu \\
& \qquad- \big(\lambda (\partial_{t}^{k} E \times \partial_{\tau} \nu)\big) \times \nu -\big(\lambda (\partial_{t}^{k} E \times \nu)\big) \times \partial_{\tau} \nu\notag\\
            &=: h_{k}.\notag
    \end{align}

\smallskip
    
    \textit{4) $\cH^{2}$-estimate for $E$ and $H$.}
    An application of the tangential derivative to the first two equations in \eqref{EQUATION:MAXWELL} produces
    \begin{equation}
        \label{EQUATION:CURL_ESTIMATE_COLLAR}
        \begin{split}
            \partial_{t} \big(\wh{\ep}_{0} \partial_{\tau} (\chi E)\big) &= \curl \partial_{\tau} (\chi H) +
            [\partial_{\tau}, \curl] (\chi H) - \partial_{\tau} (\nabla \chi \times H) -  \partial_{t} \big(\partial_{\tau} \wh{\ep}_{0} (\chi E)\big), \\
            \partial_{t} \big(\wh{\mu}_{0} \partial_{\tau} (\chi H)\big) &= -\curl \partial_{\tau} (\chi H) -
            [\partial_{\tau}, \curl] (\chi E) + \partial_{\tau} (\nabla \chi \times E) -\partial_{t} \big( \partial_{\tau}\wh{\mu}_{0}(\chi H)\big)
        \end{split}
    \end{equation}
    (cf.\ Equation \eqref{EQUATION:NEW_RHS_INTERIOR}), where $[A, B] = AB - BA$ stands for the commutator.
    Thus, in view of Equation \eqref{est:EH-H1}, we can estimate
    \begin{equation*}
        \big\|\curl \partial_{\tau} \big(\chi H(t)\big)\big\|_{L^{2}(\Omega)}^2 \leq
        c\big(\|H(t)\|_{\cH^{1}(\Omega)}^2 + \|E(t)\|_{\cH^{1}(\Omega)}^2 + \|\partial_{t} E(t)\|_{\cH^{1}(\Omega)}^2\big) \leq c e(t).
    \end{equation*}
    Similarly, observing
    \begin{equation}
        \label{EQUATION:DIV_IDENTITY_COLLAR}
        \div\big(\wh{\mu}_{0} \partial_{\tau} (\chi H)\big) = -[\partial_{\tau}, \div] (\wh{\mu}_{0} \chi H) 
            + \partial_{\tau} (\nabla \chi \cdot \wh{\mu}_{0} H)  -\div\big( \partial_{\tau} \wh{\mu}_{0}(\chi H)\big) ,
    \end{equation}
    we can bound the divergence by
    \begin{equation}
        \label{EQUATION:DIV_ESTIMATE_COLLAR}
        \big\|\div\big(\wh{\mu}_{0}(t) \partial_{\tau} (\chi H(t))\big)\big\|_{L^{2}(\Omega)}^2 \leq c \|H(t)\|_{\cH^{1}(\Omega)}^2 \leq c e(t).
    \end{equation}
   Equations \eqref{EQUATION:BC_TANGENTIALLY_DIFFERENTIATED} with $k=0$ and \eqref{est:EH-H1} yield
    \begin{align*}
        \big\|\partial_{\tau} (\chi H(t)) \times \nu + \lambda \big(\partial_{\tau} (\chi E(t)) \times \nu\big) \times \nu\big\|_{\cH^{1/2}(\Gamma)}^2
        \leq c \big(\|H(t)\|_{\cH^{1}(\Omega)}^2 + \|E(t)\|_{\cH^{1}(\Omega)}^2\big) \leq c e(t).
    \end{align*}
    Repeating the procedure for $E$ and invoking Lemma~\ref{prop:div-curl}, we arrive at
    \begin{equation}
        \label{EQUATION:TANGENTIAL_ESTIMATE}
        \big\|\partial_{\tau} \big(\chi E(t)\big)\big\|_{\cH^{1}(\Omega)}^2, \big\|\partial_{\tau} \big(\chi H(t)\big)\big\|_{\cH^{1}(\Omega)}^2 \leq c e(t).
    \end{equation}
    
    Turning to the normal derivatives, we apply $\partial_{j}$  to the first equation in \eqref{EQUATION:MAXWELL} and obtain
    \begin{equation*}
        \partial_{t} \big(\wh{\ep}_{0} \partial_{j} (\chi E)\big) = \curl \big(\partial_{j} (\chi H)\big) - \partial_{j} (\nabla \chi \times H)
                                         -  \partial_{t} \big(\partial_j \wh{\ep}_{0} (\chi E)\big)
    \end{equation*}
    for $j\in\{1,2,3\}$. Using \eqref{eq:curl1} with $u = \partial_{j} (\chi H)$ as well as inequalities
   \eqref{EQUATION:TANGENTIAL_ESTIMATE} and \eqref{est:EH-H1}, we compute
    \begin{equation}
        \label{EQUATION:NORMAL_ESTIMATE}
        \big\|\partial_{\nu} \big(\partial_{j} (\chi H(t))\big)_{\tau}\big\|_{L^{2}} ^2
        \leq c\big(\|\partial_{\tau} \partial_{j} (\chi H(t))\big\|_{L^{2}}^2 + \|H(t)\|_{\cH^{1}}^2 +\|E(t)\|_{\cH^{1}}^2 +\| \partial_t E(t)\|_{\cH^{1}}^2\big) 
        \leq c e(t).
    \end{equation}
    An analogous estimate follows for $E$. As in \eqref{EQUATION:DIV_IDENTITY_COLLAR}, we can write
    \begin{equation*}
        \div\big(\wh{\mu}_{0} \partial_{j} (\chi H)\big) = \partial_j\big(\nabla \chi \cdot \wh{\mu}_{0}H\big) - \div \big(\partial_{j} \chi  \wh{\mu}_{0} H \big).
    \end{equation*} 
    Therefore, employing \eqref{eq:div-nu}, \eqref{est:EH-H1} and \eqref{EQUATION:NORMAL_ESTIMATE}, we conclude
    \begin{align}
        \label{EQUATION:NORMAL_NORMAL_ESTIMATE}
            \big\|\partial_{\nu} \big(\partial_{j} (\chi H(t))\big)_{\nu}\big\|_{L^{2}(\Omega)}^2 &\leq
            c \big(\big\|\partial_{\nu} \big(\partial_{j} (\chi H(t))\big)_\tau\big\|_{L^{2}(\Omega)}^2 + \|H(t)\|_{\cH^{1}(\Omega)}^2\big) \le ce(t).
    \end{align}
    Repeating this argumentation for $H$, we establish 
    \[ \|\chi E(t)\|_{\cH^{2}(\Omega)}^2, \|\chi H(t)\|_{\cH^{2}(\Omega)}^2 \leq c e(t).\]
    This inequality and \eqref{EQUATION:INTERIOR_SPATIO_TEMPORAL_ESTIMATE} with $k=0$ imply the  $\cH^{2}$-estimate
    \begin{equation}
        \label{EQUATION:H2_ESTIMATE}
        \|E(t)\|_{\cH^{2}(\Omega)}^2, \|H(t)\|_{\cH^{2}(\Omega)}^2 \leq c e(t)+ cz^2(t).
    \end{equation}
    
    \smallskip

    \textit{5) $\cH^{2}$-estimate for $\partial_{t} E$ and $\partial_{t} H$.}
    We apply the $\partial_{\tau}$-operator to \eqref{eq:maxwell1} and  derive
    \begin{align}
        \label{EQUATION:NORMAL_AND_TIME_DERIVATITE_IDENTITY}
            \partial_{t} \big(\wh{\ep}_{1} \partial_{\tau} \partial_{t} (\chi E)\big) &=\curl \big(\partial_{\tau} \partial_{t} (\chi H)\big)
            + [\partial_{\tau}, \curl] \big(\partial_{t} (\chi H)\big) \notag\\
            &\qquad-\partial_{\tau} (\nabla \chi \times \partial_{t} H) - \partial_{t} \big(\partial_{\tau} \wh{\ep}_{1} \partial_{t} (\chi E)\big), \\
            \partial_{t} \big(\wh{\mu}_{1} \partial_{\tau} \partial_{t} (\chi H)\big) &= -\curl \big(\partial_{\tau} \partial_{t} (\chi E)\big)
            - [\partial_{\tau}, \curl] \big(\partial_{t} (\chi E)\big) \notag\\
            &\qquad+ \partial_{\tau} (\nabla \chi \times \partial_{t} E) - \partial_{t} \big(\partial_{\tau} \wh{\mu}_{1} \partial_{t} (\chi H)\big).\notag
    \end{align}
    Recycling the $\cH^{1}$-estimate \eqref{est:EH-H1}, it follows
    \begin{equation*}
        \big\|\curl \partial_{\tau} \partial_{t} (\chi E)\big\|_{L^{2}(\Omega)} \leq
        c\big(\|\partial_{t}^{2} H\|_{\cH^{1}(\Omega)} + \|\partial_{t} H\|_{\cH^{1}(\Omega)} + \|\partial_{t} E\|_{\cH^{1}(\Omega)}\big) 
        \leq c e^{1/2}(t) +cz(t).
    \end{equation*}
We take the tangential derivative of the identity
    \begin{equation*}
        \div\big(\wh{\ep}_{1} \partial_{t} (\chi E)\big) = -\nabla \chi \wh{\ep}_{1} \partial_{t} E
    \end{equation*}
    and conclude
    \begin{align}
        \label{EQUATION:NORMAL_AND_TIME_DERIVATITE_DIVERGENCE_IDENTITY}
        \div\big(\wh{\ep}_{1} \partial_{\tau} \partial_{t} (\chi E)\big) = -[\partial_{\tau}, \div] \big(\wh{\ep}_{1} \partial_{t} (\chi E)\big)
         -\div\big(\partial_{\tau} \wh{\ep}_{1} \partial_{t} (\chi E)\big) - \partial_{\tau} \big(\nabla \chi \wh{\ep}_{1} \partial_{t} E\big).
    \end{align}
    In view of \eqref{est:EH-H1}, this identity furnishes the inequality
    \begin{equation*}
        \big\|\div\big(\wh{\ep}_{1}(t) \partial_{\tau} \partial_{t} (\chi E(t))\big)\big\|_{L^{2}(\Omega)} \leq c \|\partial_{t} E(t)\|_{\cH^{1}(\Omega)}
        \leq c e^{1/2}(t).
    \end{equation*}
    Observe that $\partial_{\tau} \partial_{t} (\chi E)$ satisfies the boundary condition \eqref{EQUATION:BC_TANGENTIALLY_DIFFERENTIATED} with
    \begin{equation*}
        \|h_1(t)\|_{H^{1/2}(\Omega)} \leq c\big(\|\partial_{t} E(t)\|_{H^{1}(\Omega)} + \|\partial_{t} H(t)\|_{H^{1}(\Omega)}\big) \leq c e^{1/2}(t),
    \end{equation*}
    where we again employed the $\cH^1$-estimate \eqref{est:EH-H1}. Lemma~\ref{prop:div-curl} thus implies the tangential bound
    \begin{equation}
        \label{EQUATION:NORMAL_AND_TIME_DERIVATITE_COLLAR_ESTIMATE}
        \big\|\partial_{\tau} \partial_{t} (\chi E(t))\big\|_{\cH^{1}(\Omega)}^2, \ \big\|\partial_{\tau} \partial_{t} (\chi H(t))\big\|_{\cH^{1}(\Omega)}^2
        \leq c e(t).
    \end{equation}

    We next look at the normal derivative.  An application of the $\partial_{j}$-operator to \eqref{eq:maxwell1} yields
    \begin{align*}
        \partial_{t} \big(\wh{\ep}_{1} \partial_{j} \partial_{t} (\chi E)\big) = \curl \partial_{j} \partial_{t} (\chi H)
        - \partial_{j} (\nabla \chi \times \partial_{t} H) - \partial_{t} \big(\partial_{j} \wh{\ep}_{1} \partial_{t} (\chi E)\big),
    \end{align*}
   cf.\ \eqref{EQUATION:NORMAL_AND_TIME_DERIVATITE_IDENTITY}.
    By means of  \eqref{eq:curl1} with $u = \partial_{j} \partial_{t} (\chi H)$, \eqref{est:EH-H1} and 
    \eqref{EQUATION:NORMAL_AND_TIME_DERIVATITE_COLLAR_ESTIMATE}, we deduce
    \begin{align}
        \label{EQUATION:NORMAL_ESTIMATE_TIME_DERIVATIVE}
            \big\|\partial_{\nu} \big(\partial_{j} \partial_{t} (\chi H)\big)_{\tau}\big\|_{L^{2}(\Omega)} ^2
            &\leq c\big(\|\partial_{t} H\|_{\cH^{1}(\Omega)}^2 + \|\partial_{\tau} \partial_{t} (\chi H)\|_{\cH^{1}(\Omega)}^2 
            + \|\partial_{t} (\chi E)\|_{\cH^{1}(\Omega)}^2 + \|\partial_{t}^{2} (\chi E)\|_{\cH^{1}(\Omega)}^2\big)\notag\\
           & \leq c e(t) + cz^2(t).
    \end{align}
On the other hand, \eqref{eq:div} yields
    \begin{equation*}
        \div\big(\wh{\mu}_{1} \partial_{j} \partial_{t} (\chi H)\big) =
        \partial_{j} \big(\nabla \chi \cdot\wh{\mu}_{1} \partial_{t} H\big) - \div\big(\partial_{j} \wh{\mu}_{1} \partial_{t} (\chi H)\big).
    \end{equation*}
Exploiting \eqref{eq:div-nu}, \eqref{est:EH-H1}, 
    \eqref{EQUATION:NORMAL_AND_TIME_DERIVATITE_COLLAR_ESTIMATE} and \eqref{EQUATION:NORMAL_ESTIMATE_TIME_DERIVATIVE}, we get the inequality
    \begin{align}
        \label{EQUATION:NORMAL_NORMAL_TIME_SPACE_DERIVATIVE_ESTIMATE}
            \big\|\partial_{\nu} \big(\partial_{j} \partial_{t} (\chi H(t))\big)_{\nu}\big\|_{L^{2}(\Omega)}^2
            &\leq c\big(\|\partial_{\tau} \partial_{t} (\chi H(t))\|_{\cH^{1}(\Omega)}^2
            +\big\|\partial_{\nu} \big(\partial_{j} \partial_{t} (\chi H(t))\big)_{\tau}\big\|_{L^{2}(\Omega)}^2 + \|\partial_{t} H(t)\|_{\cH^{1}(\Omega)}^2\big)\notag \\
           &\le c e(t) + cz^2(t).
    \end{align}
    A similar estimate follows in an analogous fashion for $E$. The bounds \eqref{EQUATION:NORMAL_AND_TIME_DERIVATITE_COLLAR_ESTIMATE}, 
    \eqref{EQUATION:NORMAL_ESTIMATE_TIME_DERIVATIVE} and \eqref{EQUATION:NORMAL_NORMAL_TIME_SPACE_DERIVATIVE_ESTIMATE}
    yield the collar estimate
    \begin{equation*}
        \|\chi \partial_{t} E(t)\|_{\cH^{2}(\Omega)}^2, \ \|\chi \partial_{t} H(t)\|_{\cH^{2}(\Omega)}^2 \leq c e(t) + cz^2(t).
    \end{equation*}
Combining this with  \eqref{EQUATION:INTERIOR_SPATIO_TEMPORAL_ESTIMATE}, we arrive at the $\cH^2$--inequality for $\partial_t(E,H)$
     \begin{equation}
        \label{EQUATION:H_2_ESTIMATE_TIME_DERIVARITE_COLLAR}
        \|\partial_{t} E(t)\|_{\cH^{2}(\Omega)}^2, \ \| \partial_{t} H(t)\|_{\cH^{2}(\Omega)}^2 \leq c e(t) +cz^2(t).
    \end{equation}
    Hence, \eqref{EQUATION:INTERIOR_SPATIO_TEMPORAL_ESTIMATE3} leads to the interior bound
\begin{equation}
        \label{est:interior3}
        \| (1 - \chi) (E(t),H(t))\|_{\cH^{3}(\Omega)}^2 \leq c e(t) +cz^2(t).
    \end{equation}
     
     \smallskip

    \textit{6) $\cH^{3}$-estimate for $E$ and $ H$.}
    As in \eqref{EQUATION:CURL_ESTIMATE_COLLAR}, letting $\partial_{\tau}^{2} := \partial_{\tau_{i}} \partial_{\tau_{j}}$ for $i, j \in \{1, 2, 3\}$,
    we can write
    \begin{align*}
        \partial_{t} \big(\wh{\ep}_{0} \partial_{\tau}^{2} (\chi E)\big) &= \curl \partial_{\tau}^{2} (\chi H)
        + [\partial_{\tau}^{2}, \curl] (\chi H) - \partial_{\tau}^{2} (\nabla \chi \times H) 
         - 2\partial_{t} \big(\partial_{\tau} \wh{\ep}_{0} \partial_{\tau} (\chi E)\big)
        - \partial_{t} \big( \partial_{\tau}^2\wh{\ep}_{0} \chi E \big).
    \end{align*}
    Using the inequalities \eqref{EQUATION:H2_ESTIMATE} and \eqref{EQUATION:H_2_ESTIMATE_TIME_DERIVARITE_COLLAR}, we derive
    \begin{equation*}
        \big\|\curl \partial_{\tau}^{2} (\chi H(t))\big)\big\|_{L^{2}(\Omega)}^2 \leq c e(t)+cz^2(t).
    \end{equation*}
    As before, a similar estimate can be shown for $E$.
    In the same fashion as in \eqref{EQUATION:DIV_IDENTITY_COLLAR}, we compute
    \begin{align*}
            \div\big(\wh{\mu}_{0} \partial_{\tau}^{2} (\chi H)\big) &= -[\partial_{\tau}^{2}, \div] (\wh{\mu}_{0} \chi H)
                 + \partial_{\tau}^{2} (\nabla \chi \cdot \wh{\mu}_{0} H) 
            - 2\div\big(\partial_{\tau} \wh{\mu}_{0} \partial_{\tau} (\chi H)\big) - \div\big(\partial_{\tau}^2\wh{\mu}_{0} (\chi H )\big).
    \end{align*}
    Thus, on the strength of \eqref{EQUATION:H2_ESTIMATE}, we get
    \begin{equation*}
        \big\|\div\big(\wh{\mu}_{0}(t) \partial_{\tau}^{2} (\chi H(t))\big)\big\|_{L^{2}(\Omega)}^2 \leq ce(t)+cz^2(t).
    \end{equation*}
A modification of \eqref{EQUATION:BC_TANGENTIALLY_DIFFERENTIATED} further yields 
    \begin{align*}
        \big\|\partial_{\tau}^{2} (\chi H) \times \nu + \big(\lambda (\partial_{\tau}^{2} (\chi E) \times \nu)\big) \times \nu\big\|_{\cH^{1/2}(\Gamma)}^2
        \leq c\big(\|E(t)\|_{\cH^{2}(\Omega)}^2 + \|H(t)\|_{\cH^{2}(\Omega)}^2\big) \leq c e(t)+cz^2(t).
    \end{align*}
    In summary, Lemma~\ref{prop:div-curl} implies the tangential bound 
    \begin{align}
        \label{EQUATION:H_1_DOUBLE_TANGENTIAL_DERIVATIVE_ESTIMATE}
        \|\partial_{\tau}^{2} (\chi H(t))\|_{\cH^{1}(\Omega)}^2, \ \|\partial_{\tau}^{2} (\chi E(t))\|_{\cH^{1}(\Omega)}^2 \leq c e(t)+cz^2(t).
    \end{align}

    We next bound the normal derivative. In spirit of \eqref{EQUATION:CURL_ESTIMATE_COLLAR}, we obtain
    \begin{align*}
            \partial_{t}\big(\wh{\ep}_{0} \partial_{\tau} \partial_{j} (\chi E)\big) 
	&= \curl \big(\partial_{\tau} \partial_{j} (\chi H)\big) + [\partial_{\tau}, \curl] \partial_{j} (\chi H)  - \partial_{\tau} \partial_{j} (\nabla \chi \times H)\\
            &\qquad - \partial_{t} \big(\partial_{\tau} \wh{\ep}_{0} \partial_{j} (\chi E)\big)- \partial_{t} \big(\partial_{j} \wh{\ep}_{0} \partial_{\tau} (\chi E)\big)
           - \partial_{t} \big(\partial_{\tau}\partial_j \wh{\ep}_{0}  (\chi E)\big)
    \end{align*}
    for $j\in\{1,2,3\}.$
    A joint application of \eqref{eq:curl1}, \eqref{EQUATION:H2_ESTIMATE}, \eqref{EQUATION:H_2_ESTIMATE_TIME_DERIVARITE_COLLAR}
    and  \eqref{EQUATION:H_1_DOUBLE_TANGENTIAL_DERIVATIVE_ESTIMATE} furnishes the estimate
    \begin{equation}
        \label{EQUATION_NORMAL_TANGENTIAL_TANGENTIAL_SPATIAL_DERIVATIVE_COLLAR_ESTIMATE}
        \big\|\partial_{\nu} \big(\partial_{\tau} \partial_{j} H(t))_{\tau}\big\|_{L^{2}(\Omega)}^2 \leq c e(t)+cz^2(t).
    \end{equation}
    We can treat $E$ analogously. As in \eqref{EQUATION:DIV_IDENTITY_COLLAR}, we compute
    \begin{align*}
        \begin{split}
            \div\big(\wh{\mu}_{0} \partial_{\tau} \partial_{j} (\chi H)\big) 
            &= -[\partial_{\tau}, \div] \big(\wh{\mu}_{0} \partial_{j} (\chi H) + \partial_{j}\wh{\mu}_{0}(\chi H)\big) 
              - \div\big(\partial_{\tau} \wh{\mu}_{0} \partial_{j} (\chi H)\big) \\
            &\qquad  - \div\big(\partial_{j} \wh{\mu}_{0} \partial_{\tau} (\chi H)\big)- \div\big(\partial_\tau\partial_{j} \wh{\mu}_{0} (\chi H)\big)
            + \partial_{\tau} \partial_{j} \big(\nabla \chi \cdot \wh{\mu}_{0} H\big).
        \end{split}
    \end{align*}
    Using formula \eqref{eq:div-nu} and estimates \eqref{est:z},  \eqref{EQUATION:H2_ESTIMATE}, 
    \eqref{EQUATION:H_1_DOUBLE_TANGENTIAL_DERIVATIVE_ESTIMATE} and \eqref{EQUATION_NORMAL_TANGENTIAL_TANGENTIAL_SPATIAL_DERIVATIVE_COLLAR_ESTIMATE}, we estimate
    \begin{equation}
        \label{EQUATION_NORMAL_TANGENTIAL_NORMAL_SPATIAL_DERIVATIVE_COLLAR_ESTIMATE}
        \big\|\partial_{\nu} \big(\partial_{\tau} \partial_{j} (\chi H(t))\big)_{\nu}\big\|_{L^{2}(\Omega)}^2 \leq c e(t)+cz^2(t).
    \end{equation}
    The electric field $E$ is controlled similarly. 
    Combining \eqref{EQUATION_NORMAL_TANGENTIAL_TANGENTIAL_SPATIAL_DERIVATIVE_COLLAR_ESTIMATE} and \eqref{EQUATION_NORMAL_TANGENTIAL_NORMAL_SPATIAL_DERIVATIVE_COLLAR_ESTIMATE},
    we arrive at
    \begin{equation}
        \label{EQUATION_NORMAL_TANGENTIAL_SPATIAL_DERIVATIVE_COLLAR_ESTIMATE}
        \big\|\partial_{\nu} \partial_{\tau} \partial_{j} (\chi E(t))\big\|_{L^{2}(\Omega)}^2 +
        \big\|\partial_{\nu} \partial_{\tau} \partial_{j} (\chi H(t))\big\|_{L^{2}(\Omega)}^2 \leq c e(t) + cz^2(t).
    \end{equation}
    Replacing $\partial_{\tau}$ by $\partial_{\nu}$ in the calculations above, we treat 
     $\partial_{\nu}^{2} \partial_{j} (\chi E)$ and $\partial_{\nu}^{2} \partial_{j} (\chi H)$ in the same way.
    Together with \eqref{EQUATION:H_1_DOUBLE_TANGENTIAL_DERIVATIVE_ESTIMATE} and \eqref{EQUATION_NORMAL_TANGENTIAL_SPATIAL_DERIVATIVE_COLLAR_ESTIMATE},
 this leads to the interior bound
    \begin{equation*}
        \|\chi E(t)\|_{\cH^{3}(\Omega)}^2 + \|\chi H(t)\|_{\cH^{3}(\Omega)}^2 \leq c e(t) +z^2(t).
    \end{equation*}
This inequality and \eqref{est:interior3} imply the final $\cH^3$--estimate
    \begin{equation}\label{EQUATION:H_3_COLLAR_ESTIMATE}
        \| E(t)\|_{\cH^{3}(\Omega)}^2 + \| H(t)\|_{\cH^{3}(\Omega)}^2 \leq c e(t) +z^2(t).
    \end{equation}
    The claim now follows from inequalities \eqref{est:EH-H1}, 
    \eqref{EQUATION:H_2_ESTIMATE_TIME_DERIVARITE_COLLAR}, and \eqref{EQUATION:H_3_COLLAR_ESTIMATE}.
\end{proof}

%%%%%%%%%%%%%%%%%%%%%%%%%%%%%%%%%%%%%%%%%%%%%%%%%%%%%%%%%%%%%%%%%%%%%%%%%%
% Bibliography
%%%%%%%%%%%%%%%%%%%%%%%%%%%%%%%%%%%%%%%%%%%%%%%%%%%%%%%%%%%%%%%%%%%%%%%%%%

\bibliographystyle{plain}
\bibliography{maxwell-decay-bnd}

%%%%%%%%%%%%%%%%%%%%%%%%%%%%%%%%%%%%%%%%%%%%

\end{document}